\documentclass[12pt]{amsart}
\usepackage{setspace}
\usepackage{hyperref}
\usepackage[alphabetic]{amsrefs}
\usepackage{mathtools}
\usepackage{enumerate}
\usepackage{amsmath, amsthm, amssymb, thmtools, etoolbox}
\usepackage{pgfplots}
\usepackage{amsaddr}
\usepackage{comment}
\usepackage{subcaption}
\usepackage{yfonts}

\usepackage{mathrsfs}
\usetikzlibrary{arrows}
\usepackage{tikz-cd}
\usetikzlibrary{calc}
\allowdisplaybreaks

\theoremstyle{plain}

\newtheorem{thm}{Theorem}[section]
\newtheorem{defn}[thm]{Definition}
\newtheorem{rem}[thm]{Remark}
\newtheorem{prop}[thm]{Proposition}

\newtheorem{lem}[thm]{Lemma}
\newtheorem{nota}[thm]{Notation}

\numberwithin{equation}{section}

\begin{document}
\begin{abstract}
We introduce a new approach to confusability in a quantum channel, namely  quantum confusability multigraph, which incorporates the output information into the graphical structure. By``counting" the edges between two  vertices of this confusability multigraph, one recovers the traditional confusability ``single-edged" graph of the channel. With this physical motivation, we therefore develop a theory of quantum multigraphs from Weaver's quantum relations point of view and explore its quantum graph theoretic properties.
Finally, we provide a necessary and sufficient condition characterizing those quantum multigraphs that arise as quantum confusability multigraphs.
\end{abstract}
\title{A multigraph approach to confusability in quantum channels}

\author{Sk Asfaq Hossain}
\address{Department of Mathematics\\Indian Institute of Science, Education and Ressearch, Bhopal\\Bhopal Bypass road, Bhopal - 462066, India\\\textnormal{email: \texttt{asfaq1994@gmail.com}}
}
\author{Angshuman Bhattacharya} 
\address{Department of Mathematics\\Indian Institute of Science, Education and Ressearch, Bhopal\\Bhopal Bypass road, Bhopal - 462066, India\\\textnormal{email: \texttt{angshu@iiserb.ac.in}}
}

\maketitle
%\tableofcontents
\section{\textbf{Introduction}}
Quantum graphs are noncommutative generalizations of classical graphs, which are motivated from both physical and mathematical perspectives. From a physical viewpoint, a quantum graph appears as the confusability graph of a quantum channel, which serves as a fundamental invariant in the study of zero-error communication over noisy channels (\cite{6319408}, \cite{shannon2003zero}). 

From a mathematical viewpoint, a finite graph (classical) can be interpreted as a relation on a finite set. Accordingly, a quantum graph may be regarded as a quantum relation between quantum sets, that is, finite-dimensional algebras (\cite{MR2908249}). Historically, quantum graphs are introduced as operator systems arising as confusability graphs of quantum channels, and hence form a special subclass of quantum relations on the input algebra. In this work, however, we adopt a broader perspective and treat general quantum relations as quantum graphs, motivated by the classical analogy. We will therefore use the terms ``relation'' and ``graph'' interchangeably. Under this viewpoint, the confusability graph of a quantum channel corresponds to a particular class of quantum graphs which are operator systems. This convention is justified by recent developments indicating that many graph-theoretic notions and objects related to graphs are described for general quantum relations on finite quantum sets (\cite{MR4706978}, \cite{brannan2023quantum}, \cite{Brannan2022}).

The primary objective of this work is to introduce and study a notion of quantum multigraph, motivated by both physical and mathematical considerations. Classically, a finite multigraph is defined as a quadruple $(V,E,s,t)$, where $V$ and $E$ are finite sets of vertices and edges, respectively, and $s,t:E \to V$ denote the source and target maps. By labeling or counting multiple edges between a pair of vertices, any edge $\tau$ can be represented as
\[
\tau = (i,j)r, \quad \text{where } i,j \in V, \; r \in \{1,\dots,n\},
\]
with $s(\tau)=i$ and $t(\tau)=j$ (see definition 3.24 of \cite{GOSWAMI2026673}). We refer to this representation as a labeled multigraph. All multigraphs considered in this article will be labeled multigraphs.

To physically motivate the definition of a quantum multigraph, we introduce the notion of a confusability multigraph associated with a quantum channel. The standard confusability graph captures which inputs can be confused, but does not quantify the extent of this confusability. This distinction is easily visible in the classical setting. For a classical channel, the confusability graph has the input alphabet as its vertex set, and two inputs $x_1$ and $x_2$ are connected by an edge if there exists an output $y$ such that $p(y|x_1)p(y|x_2) \neq 0$. However, this graph does not record how many such outputs contribute to the confusion.

To address this limitation, we define the confusability multigraph as follows: for each output $y$ satisfying $p(y|x_1)p(y|x_2) \neq 0$, we introduce an edge between $x_1$ and $x_2$ labeled by $y$. This construction produces a labeled multigraph, which we call the confusability multigraph. The usual confusability graph can be recovered by replacing all the multiple edges between a pair of vertices with a single edge. We generalize these ideas to the quantum setting, where the extent of confusability is captured by elements of the output algebra.

Incorporating output information into confusability structures has been considered previously from information theoretic viewpoint, for instance through confusability hypergraphs of classical channels, where nonsignaling assisted zero-error capacities are expressed in terms of fractional packing numbers of the confusability hypergraph (\cite{cubitt2011zero}). In the quantum setting, analogous quantities are described via similar semidefinite programs involving the Kraus operators of the channel (\cite{duan2015no}). This suggests that the Kraus space, that is, linear span of the Kraus operators, provides a finer description of confusability than the traditional confusability graph itself.

Our construction of the quantum confusability multigraph  is done by exploiting the Stinespring representation of a completely positive map, replacing the usual scalar-valued inner product with an operator-valued inner product on the bigger Hilbert space. For a quantum channel $\Phi: B(H_{in}) \to B(H_{out})$, our computation shows (remark \ref{decomp_rem}) that the associated confusability multigraph $\tilde{S}_\Phi$ is, as a vector space, isomorphic to
\[
\mathcal{K}^* \otimes \mathcal{K}, \quad \text{and ``counting quantum edges'' we get } \quad \mathcal{K}^*\mathcal{K} = S_\Phi,
\]
where $\mathcal{K}$ denotes the Kraus space of $\Phi$ and $S_\Phi$ is the usual (single-edged) quantum confusability graph. This shows that the multigraph structure retains the full Kraus space information, in contrast to its single-edged counterpart. Therefore our quantum confusability multigraph provides a richer combinatorial framework for analyzing different zero error communications in quantum channels. It should be noted that above isomorphism is just a vector space isomorphism, the confusability multigraph itself is intrinsically a different object lying in $B(H_{in})\otimes B(H_{out})^{op}$ where $B(H_{out})^{op}$ is the opposite algebra of $B(H_{out})$ obtained by reversing the multiplication in $B(H_{out})$.  

From a mathematical perspective, a labeled classical multigraph can be viewed as a subset of $X \times X \times Y$, where $X$ is the vertex set and $Y$ is the label set used to label the edges. Following this idea, we introduce the notion of a multi-relation on a pair of finite sets $(X,Y)$. Extending Weaver's framework of quantum relations (\cite{MR2908249}), we define quantum multi-relations on pairs of finite-dimensional algebras $(M,N)$, and develop associated notions such as edge indicators and different adjacency operators. In particular, the quantum multigraph described on a pair of algebras $(M,N)$, where $M$ is the quantum vertex space and $N$ is the quantum label set, produces a positive operator in $M\otimes M^{op}$ (proposition \ref{P_v-S_v_relation}) which serves as ``weighted edge indicator" associated with the quantum multigraph. For classical multigraphs, this positive operator records the number of edges between two fixed vertices. This construction  generalizes definition 2.13 of \cite{gromada2022some}. One important distinction between the theory of quantum multigraphs and the existing theory of quantum ``single-edged"  graphs is the absence of the three equivalent formulations available in the latter (\cite{chirvasitu2022random}, \cite{MR4706978}). In this context, the most natural approach would be to adopt the perspective of Weaver’s theory of quantum relations  (\cite{MR2908249}), which we have followed in this article.

We further investigate a distinguished class of quantum multi-relations, namely decomposable multi-relations, and show that every symmetric, decomposable quantum multi-relation arises as a quantum confusability multigraph. This provides a multigraph analogue of proposition 3.12 of \cite{MR4707042} or lemma 2 in \cite{duan2009super}.

Finally, we outline the structure of the paper. Since our construction relies on operator-valued inner products, we begin in Section~\ref{prelim} with a very brief review of Hilbert $C^*$-modules. In Section \ref{conf_mult_channel}, we construct the confusability multigraph of a quantum channel and express it in terms of Kraus operators. In Section \ref{quant_mult}, we introduce classical and quantum multi-relations. Although these notions extend to general von Neumann algebras, we restrict attention to finite-dimensional algebras to emphasize graph-theoretic aspects. In Section \ref{decomp_mult}, we explore the notion of a ``decomposable" multi-relation, which can also be seen as a composition of two bipartite quantum relations. This class of multi-relations serves as a  physically meaningful example within our multi-relational framework. Towards the end, we provide necessary and sufficient conditions on these type of multi-relations for them to arise as confusability multigraphs of some completely postive maps.
\section{\textbf{Preliminaries on Hilbert C* Modules}}\label{prelim}
Let $\mathcal{A}$ be a unital C* Algebra. 
\begin{defn}
A pre-Hilbert $\mathcal{A}$ module $\mathcal{M}$ over a C* Algebra $\mathcal{A}$ is a right $A$ module $\mathcal{M}$ equipped with an $A$-valued inner product $\langle .,. \rangle_{A}: \mathcal{M}\times \mathcal{M}\rightarrow \mathcal{A}$ such that following conditions hold:
\begin{enumerate}
    \item $\langle x,x\rangle_{\mathcal{A}} \geq 0$ for any $x\in \mathcal{M}$.
    \item $\langle x,x\rangle_{\mathcal{A}}=0$ if and only if $x=0$.
    \item $\langle x,y \rangle_{\mathcal{A}}=\langle y,x\rangle_{\mathcal{A}} ^*$ for all $x,y\in\mathcal{M}$.
A pre-Hilbert $A$ module is a Hilbert $A$ module if it is complete with respect to the complex valued norm on $\mathcal{M}$ which is given by
\begin{equation*}
    ||x||:=||\langle x,x \rangle_{\mathcal{A}} ||^{1/2}
\end{equation*}
\end{enumerate}
\end{defn}
\subsubsection*{Examples:}
\begin{enumerate}
\item Given any Hilbert space $H$ and $\mathcal{A}$, the vector space $H\otimes\mathcal{A}$ is an right $\mathcal{A}$ module with $A$ valued inner product given by,
\begin{equation*}
    \langle \xi\otimes a, \eta \otimes b\rangle _{\mathcal{A}}= \langle \xi,\eta\rangle a^*b.
\end{equation*}
\item Any right ideal $I$ of $A$ can be seen as right $A$ module with $A$ valued inner product 
\begin{equation*}
\langle a,b\rangle_{\mathcal{A}} :=a^*b
\end{equation*}
\end{enumerate}
\begin{defn}
An $\mathcal{A}$ linear operator $T:\mathcal{M}\rightarrow \mathcal{N}$ between two Hilbert C* modules $\mathcal{M}$ and $\mathcal{N}$ over $A$ is called \textbf{adjointable} if there exists another $\mathcal{A}$ linear map $T^*:\mathcal{N}\rightarrow \mathcal{M}$ such that
\begin{equation*}
    \langle x,Ty \rangle_{\mathcal{A}}=\langle T^*x,y \rangle_{\mathcal{A}}\quad\text{for all}\quad x\in \mathcal{M}, y\in \mathcal{N}.
\end{equation*}
The algebra of all adjointable operators from $\mathcal{M}$ to $\mathcal{N}$ is denoted by $\mathcal{L}(\mathcal{M}, \mathcal{N})$. If $\mathcal{N}=\mathcal{M}$ then we write $\mathcal{L}(\mathcal{M}, \mathcal{N})=\mathcal{L}(\mathcal{M})$.
\end{defn}
\textbf{Example:}
If $H$ is a Hilbert space, then $\mathcal{L}(H\otimes \mathcal{A})$ is isomorphic to $B(H)\otimes \mathcal{A}$ as C* algebras. \\
\textbf{Convention: Unless otherwise stated, all Hilbert spaces are finite-dimensional, and inner products are linear in the second variable.}
\section{\textbf{Confusability Multigraph of a Quantum Channel}}\label{conf_mult_channel}
Let $\{H^a_{in}\:|\:a\}$ and $\{H^b_{out}\:|\:b\}$ be two finite collections of finite dimensional Hilbert space. We define two finite dimensional algebras (seen as input and output algebras) by
\begin{align*}
    I=\bigoplus_a B(H^a_{in})\quad\text{and}\quad O=\bigoplus_b B(H_b)
\end{align*}
where $B(H^a_{in})$ and $B(H^b_{in})$ are the algebra of bounded operators on Hilbert spaces $H^a_{in}$ and $H^{in}_b$ respectively. The algebras $I$ and $O$ are naturally subalgebras of $B(H_{in})$ and $B(H_{out})$ where $$H_{in}=\bigoplus_a H^a_{in}\quad\text{and}\quad H_{out}=\bigoplus_b H^b_{out}.$$
We equip both the algebras $I$ and $O$ with traces (non-normalized) coming from $B(H_{in})$ and $B(H_{out})$ respectively making these algebras Hilbert spaces with respect to Hilbert Schmidt inner product. \par
A quantum channel is a ``trace preserving completely positive map" $\Phi:I\rightarrow O$. As $\Phi$ is trace preserving, the adjoint map (taken with respect to the Hilbert Schmidt inner product) $\Phi^*:O\rightarrow I$ turns is a unital completely positive map (UCP in short) from $O$ to $I$.\par
To describe the confusability graph associated to a quantum channel $\Phi$, we follow the approach in section 6.2 of \cite{MR4706978}. A Stinespring representation of the UCP map $\Phi^*:O\rightarrow I\subseteq  B(H_{in})$ is given by the triplet $(\pi, V,K)$, where $K$ is a larger Hilbert space interpreted as ``channel plus environment",  the map $\pi:O\rightarrow B(K)$ is a $C^*$ algebra homomorphism and  $V:H_{in}\rightarrow K$ is an isometry such that the diagram in figure \ref{fig:Stinespring_rep_phi*} commutes.
\begin{figure}[h]
    \centering
   \begin{tikzcd}[row sep=large, column sep=large]
& B(K) \arrow[d, "V^{*}(\,\cdot\,)V"] \\
O \arrow[ur, "\pi"] \arrow[r, "\Phi^{*}"'] 
& B(H_{in})
\end{tikzcd}
    \caption{Stinespring representation for $\Phi^*:O\rightarrow I$}
    \label{fig:Stinespring_rep_phi*}
\end{figure}

It is called ``minimal" iff $K=\overline{span}\:\{\pi(T)V\xi\:|\:T\in O, \xi\in H_{in}\}$. The quantum confusability graph of a quantum channel $\Phi$ is given by theorem 6.4 of \cite{MR4706978}.  
\begin{thm}
Let $\Phi:I\rightarrow O$ be a quantum channel. Let $(\pi,V,K)$ be  a Stinespring representation of the UCP map $\Phi^*:O\rightarrow I$, then the  quantum confusability graph $S_\Phi\in B(H_{in})$ associated with $\Phi$ is given by $$S_\Phi=V^* \pi(O)' V$$ where $\pi(O)'=\{S\in B(K)\:|\: S\pi(T)=\pi(T)S\:\:\forall\:\: T\in O\}$. It further follows that, $S_{\Phi}$ is independent of the chosen Stinespring representation $(\pi,V,K)$.
\end{thm}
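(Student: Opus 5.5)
The plan is to compare $V^*\pi(O)'V$ with the standard Kraus description of the confusability graph. We need two things: that this space equals $\mathrm{span}\{K_i^*K_j\}$, where the $K_i$ are Kraus operators of $\Phi$, and that it does not depend on the Stinespring triple. Since $S_\Phi$ is by definition the confusability graph of $\Phi$, the cleanest route is to show that $V^*\pi(O)'V$ is unchanged under a change of Stinespring representation. Then we compute it once for a concrete representation built from Kraus operators.

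\textbf{Reduction to the minimal representation.} Let $(\pi,V,K)$ be arbitrary and set $K_0=\overline{span}\{\pi(T)V\xi\}$. This subspace is $\pi(O)$-invariant, so its projection $P$ lies in $\pi(O)'$. Moreover $PV=V$, since $O$ is unital and $\pi$ may be taken unital (we cut down by $\pi(1)$ if necessary, using $V^*\pi(1)V=1$ with $V$ an isometry to get $\pi(1)V=V$).
\begin{itemize}
\item For $S\in\pi(O)'$ we have $V^*SV=V^*PSPV$, and $PSP$ restricted to $K_0$ commutes with $\pi_0(O)=\pi(O)|_{K_0}$. Hence $V^*\pi(O)'V\subseteq V^*\pi_0(O)'V$.
\item Conversely, any $S_0\in\pi_0(O)'$ extends by $0$ on $K_0^\perp$ to an element of $\pi(O)'$, because $K_0^\perp$ is also invariant.
\end{itemize}
So both triples give the same space. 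Minimal representations are unitarily equivalent via a unitary $U$ with $UV=V'$ and $U\pi(\cdot)U^*=\pi'(\cdot)$. This gives $U\pi(O)'U^*=\pi'(O)'$, and so $V'^*\pi'(O)'V'=V^*\pi(O)'V$. Independence follows.

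\textbf{Computation for a concrete representation.} Assume first $O=B(H_{out})$ and write $\Phi(\rho)=\sum_i K_i\rho K_i^*$, so that $\Phi^*(T)=\sum_iK_i^*TK_i$. Take
\[
K=H_{out}\otimes\mathbb{C}^r,\qquad \pi(T)=T\otimes 1,\qquad V\xi=\sum_i K_i\xi\otimes e_i .
\]
Then $\pi(O)'=1\otimes M_r$, and $V^*(1\otimes e_{ij})V=K_i^*K_j$. Hence $V^*\pi(O)'V=\mathrm{span}\{K_i^*K_j\}$, which is the Duan--Severini--Winter confusability graph.

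For general $O=\bigoplus_b B(H^b_{out})$, we decompose $K=\bigoplus_b H^b_{out}\otimes\mathbb{C}^{r_b}$ into isotypic components. Then $\pi(O)'=\bigoplus_b 1\otimes M_{r_b}$, and we get the span of $K_{b,i}^*K_{b,j}$ with block-matching indices. This matches the definition of $S_\Phi$ in \cite{MR4706978}.

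\textbf{Main obstacle.} The delicate point is the bookkeeping in the non-factor case. The Kraus operators must be grouped by the output blocks, and a non-unital $\pi$ must be handled. I would address both through the structure theorem: every representation of $\bigoplus_b B(H^b_{out})$ is, up to unitary equivalence, $\bigoplus_b \mathrm{id}\otimes 1_{r_b}$ plus a zero part. Once this normal form is fixed, the commutant computation is immediate.
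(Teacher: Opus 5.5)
Your argument is correct, including the handling of a possibly non-unital $\pi$ via $V^*\pi(1)V=1\Rightarrow\pi(1)V=V$.

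The paper does not prove this statement itself. It imports it from Theorem 6.4 of \cite{MR4706978}, and in this paper the formula $V^*\pi(O)'V$ effectively serves as the definition of $S_\Phi$. So the real content is that the space does not depend on the dilation, and you prove that properly.

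The closest argument inside the paper is the proof of Proposition \ref{mudule_indep_mult} for Stinespring modules. It uses the same idea but does it in one step. You first compress an arbitrary dilation to $K_0$ and then invoke unitary equivalence of minimal dilations. The paper instead embeds a minimal dilation isometrically into an arbitrary one via $U(W\Gamma)=W'\Gamma$, and concludes from $U^*\mathcal{L}(\mathcal{E}')U=\mathcal{L}(\mathcal{E})$ together with $W'=UW$.

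The Hilbert-space version of this, $U\pi(T)V\xi=\pi'(T)V'\xi$, would work equally well for you:
\begin{itemize}
\item the intertwining relation $U\pi(x)=\pi'(x)U$, applied to both $x$ and $x^*$, gives $U^*\pi'(O)'U\subseteq\pi(O)'$ and $U\pi(O)'U^*\subseteq\pi'(O)'$;
\item $U^*U=1$ then upgrades the first inclusion to equality.
\end{itemize}
This merges your two-inclusion compression step and your uniqueness step into one.

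What your route adds is the explicit Kraus computation, which identifies the space with the Duan--Severini--Winter graph: the span of $K_{b,i}^*K_{b,j}$ with block-matched indices. The paper takes this identification from the citation. Your computation agrees with what one obtains by applying $id\otimes tr$ to the Kraus form of $\tilde{S}_{\Phi}$ in Theorem \ref{multigraph_kraus_form}.
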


To produce a \textbf{quantum confusability multigraph} from a quantum channel $\Phi$, we look at a special kind of Stinespring representation of $\Phi$ where the larger Hilbert space $K$ admits an output algebra valued inner product rather than a complex valued one. This approach aligns with our philosophy as the extent of confusability is captured by elements of the output algebra. Before formalizing all this, we introduce some notations.

As $I$ and $O$ both are finite direct sum of matrix algebras, they can be written as
\begin{equation*}
    I=span\:\{e^a_{ij}\:|\: i,j=1,..,n_a,; a=1..,n\}\quad\text{and}\quad O=span\:\{f^b_{kl}\:|\: ,k,l=1...,m_b; b=1,..,m \}
\end{equation*}
where $e^a_{ij}$'s and $f^b_{kl}$'s are matrix units of $a$ th component of $I$ and  $b$ th component of $O$ respectively. It follows that,
\begin{align*}
    H_{in}=span\{e^a_i\:|\:i,a\}\quad\text{and}\quad H_{out}=span\{f^b_k\:|\:k,b\}.
\end{align*}
\begin{defn}
Given a C* algebra $O$, the \textbf{opposite algebra  $O^{op}$} is obtained from $O$ by reversing the multiplication, that is, as a set and a vector space $O^{op}=O$ but the multiplication ``$*$" in $O^{op}$ is given by $$a*b=ba\quad \text{for all}\quad a,b\in O^{op}.$$
\end{defn}
\begin{rem}\label{opp_action}
     Let us denote $\overline{H_{out}}$ to be the conjugate Hilbert space of $H_{out}$ and $\xi \mapsto \overline{\xi}$ be the antilinear isomoprhism from $H_{out}$ to $\overline{H_{out}}$. The opposite algebra $O^{op}$ can be faithfully represented on the conjugate Hilbert space $\overline{H_{out}}$ via the action $\pi_{op}:O^{op}\rightarrow B(\overline{H_{out}})$ where
\begin{equation*}
    \pi_{op}(T)(\overline{\xi})=\overline{T^*\xi}\quad\forall \quad T\in O^{op}. 
\end{equation*} 
\end{rem}
We will follows these conventions throughout this article.
\begin{defn}
Given a linear map $\Phi:I \rightarrow O$, let us consider the right Hilbert $O^{op}$ module $\mathcal{M}_\Phi=H_{in}\otimes O^{op}$, where $O^{op}$ is the opposite algebra of $O$. We define an operator $C_\Phi\in\mathcal{L}(\mathcal{M}_{\Phi})$ by the following prescription:
\begin{equation*}
    C_{\Phi}(e^a_{j}\otimes 1):=\sum_{i}e^{a}_{i}\otimes \Phi(e^{a}_{ji})
\end{equation*}
where $e^a_i\otimes 1$ is a basis element of the right $O^{op}$ module $H_{in}\otimes O^{op}$  and $\Phi(e^{a}_{ji})$'s are seen as elements of $O^{op}$ instead of $O$.
\end{defn}
We introduce some more notations.
\begin{nota}\label{rank-1}
For $\xi, \eta\in H$ where $H$ is a Hilbert space, we will denote by $\theta_{\xi, \eta}$ to be the rank one operator defined by
$$\theta_{\xi,\eta}(\eta')=\langle \eta, \eta' \rangle \xi.$$
\end{nota}

\begin{prop}\label{CP_module_correspondence}
   A linear map $\Phi:I\rightarrow O$ is completely positive if and only if $C_{\Phi}\in \mathcal{L}(\mathcal{M}_{\Phi})$ is a positive operator on the Hilbert $O^{op}$ module $\mathcal{M}_{\Phi}$, that is,
   \begin{equation*}
       \langle\xi,C_{\Phi}(\xi)\rangle_{O^{op}}\geq 0\quad\text{for all}\quad\xi\in \mathcal{M}_{\Phi}
   \end{equation*}
\end{prop}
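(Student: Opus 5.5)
The approach is to reduce the statement to Choi's theorem, blockwise, by identifying $C_\Phi$ with a Choi-type matrix. As a right $O^{op}$-module, $\mathcal{M}_\Phi=H_{in}\otimes O^{op}$ has $\mathcal{L}(\mathcal{M}_\Phi)\cong B(H_{in})\otimes O^{op}$ by the example in Section \ref{prelim}. Under this identification, the prescription $C_\Phi(e^a_j\otimes 1)=\sum_i e^a_i\otimes\Phi(e^a_{ji})$ corresponds to the element
\begin{equation*}
C_\Phi=\sum_a\sum_{i,j}\theta_{e^a_i,e^a_j}\otimes \Phi(e^a_{ji})\in B(H_{in})\otimes O^{op}.
\end{equation*}
Positivity of an adjointable operator on $H\otimes\mathcal{A}$, in the sense $\langle\xi,T\xi\rangle_{\mathcal{A}}\ge 0$ for all $\xi$, is equivalent to positivity of $T$ in the $C^*$-algebra $B(H)\otimes\mathcal{A}$. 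I would verify this directly by writing $\xi=\sum_k e_k\otimes x_k$, so that $\langle\xi,T\xi\rangle=\sum_{k,l}x_k^*T_{kl}x_l$ (with the multiplication taken in $\mathcal{A}$). This is the standard criterion that a matrix $[T_{kl}]\in M_n(\mathcal{A})$ is positive iff $\sum x_k^*T_{kl}x_l\ge0$ for all tuples $(x_k)$, which follows by representing $\mathcal{A}$ faithfully on a Hilbert space.

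Next I would transport positivity from $O^{op}$ back to $O$. Using Remark \ref{opp_action}, $O^{op}$ is $*$-isomorphic to $O^{T}\subseteq B(\overline{H_{out}})$ via the transpose map $T\mapsto \overline{T^*}$-action. So $C_\Phi\ge 0$ in $B(H_{in})\otimes O^{op}$ iff $\sum_{a,i,j}\theta_{e^a_i,e^a_j}\otimes \Phi(e^a_{ji})^{t}\ge0$, where $t$ is the transpose in the basis $f^b_k$. Applying the full transpose on $B(H_{in})\otimes B(H_{out})$, which preserves positivity, this becomes
\begin{equation*}
\sum_{a,i,j}\theta_{e^a_j,e^a_i}\otimes\Phi(e^a_{ji})\ \ge 0,
\end{equation*}
and after relabeling $i\leftrightarrow j$ this is exactly $\bigoplus_a\sum_{i,j} e^a_{ij}\otimes\Phi(e^a_{ij})$, the direct sum over $a$ of the Choi matrices of $\Phi|_{B(H^a_{in})}$. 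This transpose bookkeeping is the main obstacle: the opposite algebra, the index reversal $ji$ in the definition of $C_\Phi$, and the partial transpose have to cancel exactly, and I would check it carefully on matrix units.

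Finally, the direct sum is positive iff each block is. By Choi's theorem, each block is positive iff $\Phi|_{B(H^a_{in})}$ is completely positive. Since $I=\bigoplus_a B(H^a_{in})$ and $\Phi$ is linear, $\Phi$ is completely positive iff every restriction is: a positive element of $M_k(I)$ decomposes into positive components in each $M_k(B(H^a_{in}))$. This gives both directions of the equivalence.
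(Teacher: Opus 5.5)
Your proof is correct, but it takes a different route from the paper. The paper never invokes Choi's theorem. Instead, it adapts Paulsen's correspondence between maps and functionals:
\begin{itemize}
\item it defines $s_\Phi(T)=\langle\Gamma,(\Phi\otimes id)(T)\Gamma\rangle$ on $I\otimes B(\overline{H_{out}})$, with $\Gamma=\sum_k f_k\otimes\overline{f_k}$;
\item it proves directly that $\Phi$ is CP iff $s_\Phi$ is positive, using that positive elements of $I\otimes M_m$ are sums of $\sum_{k,l}x_k^*x_l\otimes E_{kl}$ and that $\theta_{\overline{\xi_i},\overline{\xi_j}}=A_i^*A_j$;
\item it observes that $s_\Phi(T)=(tr\otimes tr)(\tilde C_\Phi T)$ with $\tilde C_\Phi=(id\otimes\pi_{op})(C_\Phi)$;
\item it cites the equivalence between module positivity and positivity in $B(H_{in})\otimes O^{op}$.
\end{itemize}

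Your route exposes the structure more directly. $\tilde C_\Phi$ is exactly the full transpose of the ordinary Choi matrix $\sum_a\sum_{i,j}e^a_{ij}\otimes\Phi(e^a_{ij})$. Your bookkeeping is right: $\pi_{op}$ acts as the transpose in the basis $\{\overline{f_k}\}$, and the global transpose sends $e^a_{ij}\otimes\Phi(e^a_{ji})^t$ to $e^a_{ji}\otimes\Phi(e^a_{ji})$. Choi's theorem on each summand $B(H^a_{in})$ then finishes the argument. This is shorter if Choi's theorem is taken as known, and it explains why $O^{op}$ appears at all: it absorbs the output transpose.

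The paper's argument, by contrast, handles the non-factor algebra $I$ in one stroke without splitting into blocks. It lets $\overline{H_{out}}$ and $\Gamma$ absorb the transposes silently, at the cost of essentially reproving Choi's theorem.

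One minor caution concerns the converse half of your criterion ($[T_{kl}]\ge0$ iff $\sum_{k,l}x_k^*T_{kl}x_l\ge0$). An arbitrary faithful representation does not suffice as you stated it. You need one in which every vector has the form $xh$ for a fixed $h$, for example the left regular representation of the finite-dimensional algebra on its $L^2$-space, or else a GNS density argument.
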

\begin{proof}
We adapt the proof of proposition 6.1 in \cite{MR1976867} in our setting. Let us define a linear functional $s_{\Phi}:I\otimes B(\overline{H_{out}})\rightarrow \mathbb{C}$ by
\begin{equation*}
    s_{\Phi}(T)=\langle \Gamma, \tilde{\Phi} (T)\Gamma\rangle\quad \text{for all}\quad T\in I\otimes B(\overline{H_{out}}),
\end{equation*}
  where $\tilde{\Phi}(T)=(\Phi\otimes id)(T)\in B(H_{out}\otimes \overline{H_{out}})$ and $\Gamma\in H_{out}\otimes \overline{H_{out}}$ is given by
  \begin{align*}
  \Gamma=\sum_{k} f_{k}\otimes \overline{f_k}. 
  \end{align*}
\textbf{Claim:} The map $\Phi$ is CP if and only if $s_{\Phi}$ is positive.\\
\textbf{Proof of claim}: It is easy to see that if $\Phi$ is CP, then $s_{\Phi}$ is a positive functional. \\
Conversely, let $s_{\Phi}$ be a positive functional. It is enough to show that $\Phi_m:=\Phi\otimes id : I\otimes M_m\rightarrow O\otimes  M_m$ is a positive map where $M_m$ is the algebra of $m\times m$ matrices.\par 
Using lemma 3.13 of \cite{MR1976867}, it follows that any positive element of $I\otimes M_m$ is finite sum of elements of the form $X=\sum^m_{k,l=1}x^*_kx_l \otimes E_{kl}$ where $\{x_1,x_2,..,x_m\}\subseteq I$ and $\{E_{kl}\:|\:k,l=1,..,m\}$ is the standard basis of $M_m$ consisting of elementary matrices.
For any $\tilde{\xi}\in H_{out}\otimes \mathbb{C}^m$ and a positive element $X=\sum^m_{k,l=1}x^*_kx_l \otimes E_{kl}\in I\otimes M_m$, we observe that,
\begin{align}
\langle \tilde{\xi}, \Phi_m(X)\tilde{\xi}\rangle &= \sum^m_{i,j=1} \langle \xi_i, \Phi(x_i^*x_j)\xi_j \rangle \notag\\ &= \sum_{i,j,k,l} \langle f_{k}, \Phi(x_i^*x_j)f_{l}\rangle \langle \overline{f_{k}},\: \theta_{\overline{\xi_{i}},\overline{\xi_{j}}}\overline{f_{l}}\rangle
=\langle \Gamma, \tilde\Phi (x_i^*x_j \otimes \theta_{\overline{\xi_i},\overline{\xi_j}}) \Gamma\rangle.  \label{Choi_matrix_prop_pf}
\end{align}
We write $\theta_{\overline{\xi_i},\overline{\xi_j}}=A^*_i A_j$ where $A_i=\theta_{\overline{f_1},\overline{\xi_{i}}}$.  
Therefore combining these observations with equation \ref{Choi_matrix_prop_pf} produces 
\begin{align*}
\langle \tilde{\xi}, \Phi_m(X) \tilde{\xi} \rangle &= \langle \Gamma,\tilde{\Phi} ((\sum_{i} x_i\otimes A_i)^* (\sum_{i} x_i\otimes A_i)) \Gamma \rangle\\
&=s_{\Phi}((\sum_{i} x_i\otimes A_i)^* (\sum_{i} x_i\otimes A_i))\geq 0
\end{align*}
as $s_{\Phi}$ is a positive linear functional. \\
As $\mathcal{L}(\mathcal{M}_{\Phi})\cong B(H_{in})\otimes O^{op}$ as algebras, under this isomorphism the map $C_{\Phi}$ has the following form: 
\begin{equation}\label{Choi_mat_formula}
C_{\Phi}=\sum_{i,j,a}e^{a}_{ij}\otimes \Phi(e^{a}_{ji}).
\end{equation}
From proposition 2.1.3 of \cite{MR2125398}, it follows that $\Phi$ is CP, if and only if $C_{\Phi}$ is a positive element of the C* algebra  $\mathcal{L}(\mathcal{M}_{\Phi})$.
Furthermore, by considering $\tilde{C}_{\Phi}=(id\otimes \pi_{op}) (C_{\Phi})\in B(H_{in}\otimes \overline{H_{out}})$ we observe that $$s_{\Phi}(T)=(tr\otimes tr)(\tilde{C}_{\Phi}T)\quad\text{for all}\quad T\in I\otimes B(\overline{H_{out}}).$$
Using the claim we proved, we observe that $\Phi$ is CP iff $s_{\Phi}$ is positive iff the map $C_{\Phi}:\mathcal{M}_{\Phi}\rightarrow \mathcal{M}_{\Phi}$ is a positive map of Hilbert $O^{op}$ modules. 
\end{proof}
\begin{rem}\label{Choi-type_invar}
  The element $C_{\Phi}\in B(H_{in})\otimes O^{op}$ in equation \ref{Choi_mat_formula} is a ``Choi-type" operator associated with a linear map $\Phi$ whose positivity determines complete positivity of $\Phi$. We will refer it simply as the \textbf{Choi-type invariant} of $\Phi$ in this article.
\end{rem}

\begin{defn}
A \textbf{Stinespring module related to a quantum channel} $\Phi: I\rightarrow O$ is a pair $(\mathcal{E}, W)$ where $\mathcal{E}$ is a Hilbert $O^{op}$  module and $W\in \mathcal{L}(\mathcal{M}_{\Phi}, \mathcal{E})$ such that,
\begin{align*}
W^*W=C_{\Phi}.
\end{align*}
Moreover, A Stinespring module $(\mathcal{E},W)$ is said to be \textbf{``minimal"} if $Range (W)$ is dense in $\mathcal{E}$.
\end{defn}
 The next proposition shows that any Stinepring module  generates a Stinespring representation for the UCP map $\Phi^*$. Before stating the result, we prove a lemma which will be crucial to the constructions later on.
\begin{lem}\label{lemma_theta*}
Given a CP map $\Phi:I\rightarrow O$, the adjoint $\Phi^*:O\rightarrow I$ is given by 
\begin{align*}
    \Phi^*(x)=\sum_{i,a}e^{a}_{ij}\:tr(\Phi(e^{a}_{ji})x) \quad\text{for all} \quad x\in O
\end{align*}
\end{lem}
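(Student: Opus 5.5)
The formula is a direct computation of the Hilbert–Schmidt adjoint. Since both sides are linear in $x$, it suffices to verify that for every $x\in O$ and every matrix unit $e^{a}_{kl}$ of $I$,
\begin{equation*}
\langle e^{a}_{kl}, \Phi^*(x)\rangle_{HS} = \langle \Phi(e^{a}_{kl}), x\rangle_{HS}.
\end{equation*}
The matrix units $\{e^a_{ij}\}$ span $I$ and are orthonormal with respect to $\langle A,B\rangle = tr(A^*B)$, so this identity pins down $\Phi^*(x)$. The only care needed concerns the star in the first slot, because the inner product is linear in the second variable.

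The plan has two steps. First, expand $\Phi^*(x)$ in the orthonormal basis:
\begin{equation*}
\Phi^*(x)=\sum_{a,i,j} e^{a}_{ij}\,\langle e^{a}_{ij},\Phi^*(x)\rangle = \sum_{a,i,j} e^{a}_{ij}\, tr\big(\Phi(e^{a}_{ij})^* x\big).
\end{equation*}
Second, rewrite $\Phi(e^a_{ij})^*$. Since $\Phi$ is completely positive, it is in particular positive, hence $*$-preserving: every self-adjoint element is a difference of positives, and $\Phi$ is linear. Therefore
\[
\Phi(e^{a}_{ij})^*=\Phi\big((e^a_{ij})^*\big)=\Phi(e^{a}_{ji}).
\]
Substituting gives $\Phi^*(x)=\sum_{a,i,j}e^a_{ij}\,tr(\Phi(e^a_{ji})x)$. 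This is the stated formula, with the summation over $j$ made explicit; the displayed sum in the statement omits $j$, which is presumably a typographical slip.

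I do not expect a genuine obstacle. The one point needing care is the $*$-preservation, and this is the only place the CP hypothesis enters. Without it, one would have $\overline{\Phi}(e^a_{ji})$ in place of $\Phi(e^a_{ji})$, where $\overline{\Phi}(y)=\Phi(y^*)^*$. Beyond that, I would double-check only the trace convention: both traces are the non-normalized traces inherited from $B(H_{in})$ and $B(H_{out})$, so the $e^a_{ij}$ have Hilbert–Schmidt norm one and no normalization constants appear.
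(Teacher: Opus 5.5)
Your proposal is correct and follows the paper's proof essentially step for step: expand $\Phi^*(x)$ in the Hilbert--Schmidt orthonormal basis of matrix units, move $\Phi^*$ across the inner product, and use that a CP map is $*$-preserving to replace $\Phi(e^{a}_{ij})^*$ by $\Phi(e^{a}_{ji})$. You are also right that the displayed sum in the statement should run over $j$ as well; the paper's own computation sums over $i,j,a$.
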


\begin{proof}
We recall that trace in $B(H_{out})$ makes the algebra $O$ a Hilbert space with respect to the inner product:
\begin{align*}
    \langle a,b \rangle_{tr}=tr(a^*b)\quad\text{for all}\quad a,b\in O.
\end{align*}
Using the fact that $\Phi$ is CP and hence $*$ preserving, we have 
\begin{align*}
\Phi^*(x)= \sum_{i,j,a}e^{a}_{ij}\:\langle e^{a}_{ij},\Phi^*(x)\rangle_{tr}
&=\sum_{i,j,a}e^{a}_{ij}\:\langle \Phi( e^{a}_{ij}),x\rangle_{tr}\\
&=\sum_{i,j,a}e^{a}_{ij}\:tr({\Phi( e^{a}_{ij})}^*x)=\sum_{i,j,a}e^{a}_{ij}\:tr({\Phi( e^{a}_{ji})}x)
\end{align*}
\end{proof}
\begin{prop}\label{module_rep_correspondence}
A Stinespring module $(\mathcal{E}, W)$ related to a quantum channel $\Phi:I\rightarrow O$ produces a Stinespring representation for the UCP map $\Phi^*:O\rightarrow I$, given by the triplet $(\pi,V,\mathcal{E})$, where $\mathcal{E}$ is regarded as a Hilbert space with respect to the inner product $tr(\langle.,.\rangle_{O^{op}})$, the maps $\pi:O\rightarrow B(\mathcal{E})$ and $V:H_{in}\rightarrow \mathcal{E}$ are described as follows:
\begin{align*}
    \pi(x)(\xi)=\xi*x\quad \text{and} \quad V(\eta)=W(\eta\otimes 1)
\end{align*}
where $\xi\in \mathcal{E}$, $\eta\in H_{in}$ and $``*"$ denotes the right $O^{op}$ multiplication in $\mathcal{E}$. Moreover, if $(\mathcal{E}, W)$  is minimal, then the associated Stinespring representation is also minimal.
\end{prop}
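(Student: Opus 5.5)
We must check four things: that $tr(\langle\cdot,\cdot\rangle_{O^{op}})$ makes $\mathcal{E}$ a Hilbert space; that $\pi$ is a $*$-homomorphism of $O$ (not of $O^{op}$) into $B(\mathcal{E})$; that $V$ is an isometry with $V^*\pi(x)V=\Phi^*(x)$; and that minimality of $(\mathcal{E},W)$ gives minimality of the triple.

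For the first point, the trace on $O^{op}$ is the trace of $O$, since $O^{op}=O$ as a vector space and $tr(ab)=tr(ba)$. Positivity of the trace together with positive definiteness of the module inner product then makes $tr(\langle\cdot,\cdot\rangle_{O^{op}})$ an honest inner product. Completeness is automatic because $\mathcal{E}$ is finite dimensional: it is spanned by $W(\mathcal{M}_\Phi)$ when minimal, and in general we will assume it is finitely generated; otherwise we use completeness of the module norm.

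For $\pi$, right multiplication reverses order. The map $\xi\mapsto\xi*x$ satisfies $\pi(x)\pi(y)\xi=(\xi*y)*x=\xi*(y*x)$, and in $O^{op}$ we have $y*x=xy$, so $\pi(x)\pi(y)=\pi(xy)$. This is exactly why the module is taken over $O^{op}$. For the adjoint we compute
\[
tr\langle \xi,\eta*x\rangle=tr(\langle\xi,\eta\rangle*x)=tr(x\langle\xi,\eta\rangle),
\]
using $O^{op}$-linearity of the inner product in the second variable. Similarly $tr\langle\xi*x^*,\eta\rangle=tr((x^*)^{\#}\langle\xi,\eta\rangle)$, where the involution on $O^{op}$ coincides with that on $O$. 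This gives $tr(x\langle\xi,\eta\rangle)$ by the usual identity $\langle \xi*a,\eta\rangle=a^*\langle\xi,\eta\rangle$ in $O^{op}$, hence $\pi(x)^*=\pi(x^*)$.

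For the main identity, take $\eta=e^a_j$ and $\eta'=e^{a'}_{j'}$ in $H_{in}$. Then
\[
\langle V\eta,\pi(x)V\eta'\rangle=tr\big(\langle W(\eta\otimes1),W(\eta'\otimes1)\rangle*x\big)=tr\big(x\,\langle \eta\otimes 1,C_\Phi(\eta'\otimes1)\rangle_{O^{op}}\big).
\]
By the definition of $C_\Phi$, $C_\Phi(e^{a'}_{j'}\otimes1)=\sum_i e^{a'}_i\otimes\Phi(e^{a'}_{j'i})$, and the inner product with $e^a_j\otimes1$ gives $\delta_{aa'}\Phi(e^a_{j'j})$. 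Hence the matrix entry is $\delta_{aa'}tr(x\Phi(e^a_{j'j}))=tr(\Phi(e^a_{j'j})x)$. By Lemma \ref{lemma_theta*}, the $(j,j')$ entry of $\Phi^*(x)$ is also $tr(\Phi(e^a_{j'j})x)$, so $V^*\pi(x)V=\Phi^*(x)$. Setting $x=1$ and using that $\Phi^*$ is unital, since $\Phi$ is trace preserving, shows that $V$ is an isometry.

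For minimality, any $W(\eta\otimes y)$ equals $W(\eta\otimes1)*y=\pi(y)V\eta$ by $O^{op}$-linearity of $W$. Thus $\mathrm{Range}(W)=\mathrm{span}\{\pi(y)V\eta\}$, and density in $\mathcal{E}$ transfers directly.

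The main obstacle is bookkeeping of the opposite multiplication, both in getting $\pi$ multiplicative rather than anti-multiplicative and in matching index order with Lemma \ref{lemma_theta*}. I would carry out these two checks first and most carefully.
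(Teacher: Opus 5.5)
Your proposal is correct and follows the paper's route: write $\langle V\eta,\pi(x)V\eta'\rangle$ as the trace of the $O^{op}$-valued inner product, replace $W^*W$ by $C_\Phi$, and match the resulting entries $tr(\Phi(e^a_{j'j})x)$ with Lemma \ref{lemma_theta*}. Your additional checks are points the paper leaves implicit, and each is handled correctly: that $\pi$ is a unital $*$-homomorphism of $O$ precisely because the module is over $O^{op}$, that $V$ is an isometry via $\Phi^*(1)=1$, and the minimality argument $W(\eta\otimes y)=\pi(y)V\eta$.
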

\begin{proof}
For $\xi,\eta\in\mathcal{E}$, we write $$\langle \xi, \eta \rangle_{tr}= tr(\langle \xi, \eta \rangle_{O^{op}}).$$
To show that the triplet $(\pi,V,\mathcal{E})$ is a Stinespring representation for $\Phi^*$ we need to prove that $$V^*\pi(x)V(\xi)=\Phi^*(x)(\xi)\quad \text{for all}\quad\xi\in H_{in}, x\in O.$$
We  observe that, for arbitrary $\xi,\eta\in H_{in}$ and $x\in O$ 
\begin{align*}
    \langle \eta, V^*\pi(x)V(\xi) \rangle_{tr}=\langle V(\eta), \pi(x)V(\xi)\rangle_{tr}
    &=tr(\langle W(\eta\otimes 1), W(\xi\otimes x)\rangle_{O^{op}})\\
    &=tr\:(\langle \eta\otimes 1, W^*W(\xi\otimes x) \rangle_{O^{op}})\\
    &=tr\:(\langle \eta\otimes 1, C_{\Phi}(\xi\otimes x) \rangle_{O^{op}} )
\end{align*}
Putting $\xi=e_j^{a}$ in the above expression and using lemma \ref{lemma_theta*} we observe that,
\begin{align*}
\langle \eta, V^*\pi(x)V(e^a_j) \rangle_{tr}=\sum^{n_a}_{i=1} \langle \eta, e^a_{i}\rangle tr(\Phi(e^a_{ji})x)= \langle \eta, \sum^{n_a}_{i=1}tr(\Phi(e^a_{ji})x) e^a_{i}\rangle= \langle \eta, \Phi^*(x)e^{a}_{j}\rangle.
\end{align*}
As $e^a_j$ is an arbitrary basis element of $H_{in}$, the result follows. If $(\mathcal{E}, W)$ is a minimal Stinespring module, then minimality of the associated Stinespring representation is straightforward to check. 
\end{proof}
\begin{rem}
    It should be noted that for any $T$ in either $O$ or $O^{op}$ (both are same as sets), $tr(T)=tr(\pi_{op}(T))$.Therefore, there is no ambiguity when we consider the same trace on $O^{op}$ as $O$ which has been done in the previous proposition.
\end{rem}

\begin{prop}\label{mudule_indep_mult}
   Let $(\mathcal{E},W)$ and  $(\mathcal{E}',W')$ be two Stinespring modules related to a quantum channel $\Phi:I\rightarrow O$. Then following condition holds:
   \begin{equation*}
   W^*\mathcal{L}(\mathcal{E})W=W'^{*}\mathcal{L}(\mathcal{E}') W'\subseteq \mathcal{L}(\mathcal{M}_{\Phi})
   \end{equation*}
   \end{prop}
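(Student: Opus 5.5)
I will reduce the statement to a Hilbert $C^*$-module analogue of the uniqueness of the minimal Stinespring dilation. The aim is to express $W^*\mathcal{L}(\mathcal{E})W$ entirely in terms of $C_\Phi = W^*W$. The classical fact I would mimic is that, for operators $W$ with $W^*W = C$, one has $W^*B(K)W = C^{1/2}B(H)C^{1/2}$. This holds because $W = U C^{1/2}$ with $U$ a partial isometry from $\overline{\mathrm{Range}(C^{1/2})}$ onto $\overline{\mathrm{Range}(W)}$.

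\textbf{Step 1: polar decomposition.} Everything here is finite-dimensional. $\mathcal{M}_\Phi = H_{in}\otimes O^{op}$ is a finitely generated free module, and I will take $\mathcal{E}$ to be finitely generated as well; otherwise I first replace $\mathcal{E}$ by $\overline{\mathrm{Range}(W)}$, which is justified in Step 3. Write $P = C_\Phi^{1/2}\in\mathcal{L}(\mathcal{M}_\Phi)$, using the positivity of $C_\Phi$ from Proposition \ref{CP_module_correspondence}. Define
\[
U:\overline{\mathrm{Range}(P)}\to\mathcal{E},\qquad U(P\xi)=W\xi.
\]
The identity $\langle W\xi,W\eta\rangle_{O^{op}}=\langle \xi,C_\Phi\eta\rangle_{O^{op}}=\langle P\xi,P\eta\rangle_{O^{op}}$ shows that $U$ is well defined, $O^{op}$-linear and inner-product preserving. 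So $W = UP$.

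\textbf{Step 2: complementability.} I expect this to be the main obstacle. In general Hilbert modules, closed submodules need not be complemented, and isometries need not be adjointable. Here I would use finite dimensionality. $\mathcal{L}(\mathcal{M}_\Phi)\cong B(H_{in})\otimes O^{op}$ is a finite-dimensional $C^*$-algebra, so $P$ has closed range, and its range projection $Q$ lies in this algebra; hence $\mathrm{Range}(P)$ is complemented. Similarly $\mathrm{Range}(W)$ is a finitely generated submodule of $\mathcal{E}$, and its range projection $WW^*$-support lies in $\mathcal{L}(\mathcal{E})$. This lets me extend $U$ by zero to an adjointable partial isometry $U\in\mathcal{L}(\mathcal{M}_\Phi,\mathcal{E})$ with $U^*U = Q$ and $UU^*$ equal to the range projection of $W$.

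\textbf{Step 3: computing both sides.} For $T\in\mathcal{L}(\mathcal{E})$, I have $W^*TW = P(U^*TU)P$ with $U^*TU\in\mathcal{L}(\mathcal{M}_\Phi)$. Hence $W^*\mathcal{L}(\mathcal{E})W\subseteq P\mathcal{L}(\mathcal{M}_\Phi)P$.

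For the reverse inclusion, take $S\in\mathcal{L}(\mathcal{M}_\Phi)$ and set $T = USU^*\in\mathcal{L}(\mathcal{E})$. Then
\[
W^*TW = PU^*USU^*UP = PQSQP = PSP,
\]
since $PQ = QP = P$.

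So $W^*\mathcal{L}(\mathcal{E})W = C_\Phi^{1/2}\mathcal{L}(\mathcal{M}_\Phi)C_\Phi^{1/2}$. The right-hand side depends only on $\Phi$. Applying the same argument to $(\mathcal{E}',W')$ gives the claimed equality, and the inclusion in $\mathcal{L}(\mathcal{M}_\Phi)$ is immediate. This also shows the answer is unchanged when $\mathcal{E}$ is replaced by the closure of the range of $W$, which justifies the reduction made in Step 1.
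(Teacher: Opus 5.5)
Your argument is correct, but it takes a different route from the paper.

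\textbf{The paper's route.} The paper assumes without loss of generality that $(\mathcal{E},W)$ is minimal. It defines $U:\mathcal{E}\to\mathcal{E}'$ by $U(W\Gamma)=W'\Gamma$ and checks that $U$ is an adjointable isometry, using that $\mathrm{Range}(W')$ is finite-dimensional, hence closed and complemented. It then concludes from $W'=UW$ and $U^*\mathcal{L}(\mathcal{E}')U=\mathcal{L}(\mathcal{E})$. This is the module version of uniqueness of the minimal Stinespring dilation. It compares the two modules directly, needs no functional calculus, and yields the extra structural fact that every Stinespring module contains a complemented isometric copy of a minimal one.

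\textbf{Your route.} You factor every $W$ through the canonical dilation $(\mathcal{M}_\Phi, C_\Phi^{1/2})$. This gives the intrinsic formula
\begin{equation*}
\tilde{S}_\Phi=C_\Phi^{1/2}\,\mathcal{L}(\mathcal{M}_\Phi)\,C_\Phi^{1/2}.
\end{equation*}
What this buys:
\begin{enumerate}
\item You never need a minimal module to compare against, which the paper's ``without loss of generality'' implicitly requires.
\item Since $C_\Phi^{1/2}$ is invertible on its support, the formula sharpens to $\tilde{S}_\Phi=Q\,\mathcal{L}(\mathcal{M}_\Phi)\,Q$, where $Q$ is the support projection of $C_\Phi$. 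So $\tilde{S}_\Phi$ is a corner of $B(H_{in})\otimes O^{op}$, with its own unit $Q$.
\item In the purely quantum case, Proposition \ref{quant_confus_mult_alg_str_prop} then becomes essentially immediate.
\end{enumerate}

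\textbf{Two minor remarks.} First, the reduction to finitely generated $\mathcal{E}$ is unnecessary, and the phrase ``justified in Step 3'' is slightly circular. Your Step 2 already works for arbitrary $\mathcal{E}$, because $\mathrm{Range}(W)$ is finite-dimensional and therefore complemented. Second, you can skip the extension-by-zero argument entirely. Set $U=W\,h(C_\Phi)$ with $h(t)=t^{-1/2}$ for $t>0$ and $h(0)=0$. This $U$ is adjointable by construction and satisfies $U^*U=Q$ and $U\,C_\Phi^{1/2}=W$.
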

   \begin{proof}
      Without loss of generality, we assume that $(\mathcal{E}, W)$ is minimal. As $W:\mathcal{M}_{\Phi}\rightarrow \mathcal{E}$ is surjective, we define an $O^{op}$ linear map $U:\mathcal{E}\rightarrow \mathcal{E}'$ by the following description:
      \begin{equation*}
          U(W(\Gamma))=W'(\Gamma)\quad \text{for all} \quad \Gamma \in H_{in}\otimes O^{op}
      \end{equation*}
      To check that $U$ is well defined, we observe that, for any $\Gamma\in H_{in}\otimes O^{op}$,
      \begin{align*}
          W(\Gamma)=0\quad&\text{if and only if}\quad \langle \Gamma, W^*W(\Gamma)\rangle_{O^{op}}=0\\ &\text{if and only if}\quad \langle \Gamma, C_{\Phi}(\Gamma)\rangle_{O^{op}}=0\\
          &\text{if and only if}\quad \langle \Gamma, {W'}^*W'(\Gamma)\rangle_{O^{op}}=0 \quad \text{iff}\quad W'(\Gamma)=0.
     \end{align*}
     The map $U$ is an isometry. For any $\Gamma_1, \Gamma_2\in H_{in}\otimes O^{op}$, we observe that,
 \begin{align*}
         \langle W(\Gamma_1)), W(\Gamma_2)\rangle_{O^{op}}=\langle \Gamma_1, C_{\Phi}(\Gamma_2)\rangle_{O^{op}}&= \langle \Gamma_1,{W'}^*W'(\Gamma_2\rangle_{O^{op}}\\
         &= \langle W'(\Gamma_1),W'(\Gamma_2)\rangle_{O^{op}}.
\end{align*}
    We claim that, $U\in \mathcal{L}(\mathcal{E},\mathcal{E}')$ and $U^*U=id_{\mathcal{E}}$. To show this, it is enough to show that $Range (U)$ is a complemented $O^{op}$ submodule of $\mathcal{E}'$ (proposition 3.6 of \cite{MR1325694}). We observe that, $Range(U)=Range (W')$ and $Range(W')$ is a complemented $O^{op}$ submodule of $\mathcal{E}'$ as $W'\in\mathcal{L}(\mathcal{M}_{\Phi}, \mathcal{E}')$ has finite dimensional range and hence closed (see proposition 3.2 of \cite{MR1325694}). \\
    \textbf{Claim:} $U^*\mathcal{L}(\mathcal{E}')U=\mathcal{L}(\mathcal{E})$.\\
    \textbf{Proof of claim:} It is clear that $U^*\mathcal{L}(\mathcal{E}')U\subseteq \mathcal{L}(\mathcal{E})$. Conversely, for any $T\in\mathcal{L}(\mathcal{E})$, using $U^*U=id_{\mathcal{E}}$, we observe that,
    \begin{align*}
    T=U^*(UTU^*)U\in U^*\mathcal{L}(\mathcal{E}')U.
    \end{align*}
    Using the claim and the fact that $W'=UW$, the result follows.
    \end{proof}
    \begin{defn}
    The \textbf{quantum confusability multigraph} associated with a quantum channel $\Phi:I\rightarrow O$ is given by 
    \begin{equation*}
    \tilde{S_{\Phi}}= W^*\mathcal{L}(\mathcal{E})W\subseteq \mathcal{L} (\mathcal{M}_{\Phi})\cong B(H_{in})\otimes O^{op}
    \end{equation*}
    where $(\mathcal{E}, W)$ is a Stinespring module related to the channel $\Phi$.
\end{defn}
\begin{rem}
One can see that the trace preserving property of a quantum channel does not play any role in the construction of quantum confusability multigraph. Therefore, a quantum confusability multigraph can be associated with any CP map via the same construction. Moreover, proposition \ref{mudule_indep_mult} shows that the quantum confusability multigraph is  independent of the choice of Stinespring module related to $\Phi:I\rightarrow O$.
\end{rem}

\begin{prop}
Let $\Phi:I \rightarrow O$ be a quantum channel. The   quantum confusability single-edged graph is obtained by ``counting" the edges of the quantum confusability multigraph, that is, 
\begin{equation*}
    (id\otimes tr)(\tilde{S_{\Phi}})=S_{\Phi}.
\end{equation*}

\end{prop}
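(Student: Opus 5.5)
The plan is to reduce the statement to the description $S_\Phi=V^*\pi(O)'V$ from the theorem quoted above (theorem 6.4 of \cite{MR4706978}), applied to the Stinespring representation that a Stinespring module produces via Proposition \ref{module_rep_correspondence}. By Proposition \ref{mudule_indep_mult}, $\tilde{S_\Phi}$ does not depend on the Stinespring module, and $S_\Phi$ does not depend on the Stinespring representation. So we may fix one convenient Stinespring module.

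\textbf{Choice of module.} Since $C_\Phi\geq 0$ in the C*-algebra $\mathcal{L}(\mathcal{M}_\Phi)\cong B(H_{in})\otimes O^{op}$ (Proposition \ref{CP_module_correspondence}), we take $\mathcal{E}=\mathcal{M}_\Phi=H_{in}\otimes O^{op}$ and $W=C_\Phi^{1/2}$. Then $W^*W=C_\Phi$, so $(\mathcal{E},W)$ is a Stinespring module. Proposition \ref{module_rep_correspondence} gives the Stinespring representation $(\pi,V,\mathcal{E})$, where:
\begin{itemize}
\item $\mathcal{E}$ carries the inner product $tr\langle\cdot,\cdot\rangle_{O^{op}}$;
\item $\pi(x)\xi=\xi*x$;
\item $V\eta=W(\eta\otimes 1)$.
\end{itemize}

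\textbf{Step 1: $\pi(O)'=\mathcal{L}(\mathcal{E})$ inside $B(\mathcal{E})$.} As a vector space, $\mathcal{E}=H_{in}\otimes O$. The right $O^{op}$-action $\xi*x$ is left multiplication by $x$ in $O$ on the second tensor factor. So $\pi(O)=1\otimes L(O)$, where $L$ denotes the left regular representation of $O$ on itself. The trace inner product $tr(a^*b)$ makes $O$ a Hilbert space, and on it the commutant of the left regular representation is the right regular representation, $L(O)'=R(O)$; this is standard for finite-dimensional algebras with a faithful trace. Hence $\pi(O)'=B(H_{in})\otimes R(O)$. Right multiplication by $y\in O$ is exactly the action of $y$ viewed in $O^{op}$ on the module. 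Therefore $\pi(O)'$ coincides with $B(H_{in})\otimes O^{op}\cong\mathcal{L}(\mathcal{M}_\Phi)$, acting on $\mathcal{E}$ in the standard way. This identification of the two operator algebras, and the check that the two notions of adjoint agree, is the step I expect to need the most care. The cleanest route is to verify it on elementary tensors $\theta\otimes y$.

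\textbf{Step 2: the trace formula.} Fix $T\in\mathcal{L}(\mathcal{E})$ and set $X=W^*TW\in B(H_{in})\otimes O^{op}$. Write $X=\sum_{(a,i),(b,j)}e^a_i(e^b_j)^*\otimes x^{ab}_{ij}$. For basis vectors of $H_{in}$ we compute, exactly as in the proof of Proposition \ref{module_rep_correspondence},
\[
\langle e^a_i,V^*TVe^b_j\rangle=tr\langle W(e^a_i\otimes1),TW(e^b_j\otimes 1)\rangle_{O^{op}}=tr\langle e^a_i\otimes 1,X(e^b_j\otimes1)\rangle_{O^{op}}=tr(x^{ab}_{ij}).
\]
By the remark following Proposition \ref{module_rep_correspondence}, the trace is the same whether an element is read in $O$ or in $O^{op}$. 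Hence $V^*TV=(id\otimes tr)(W^*TW)$ for every $T\in\mathcal{L}(\mathcal{E})$.

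\textbf{Conclusion.} Taking the union over all $T\in\mathcal{L}(\mathcal{E})=\pi(O)'$ (Step 1) and using Step 2 gives
\[
S_\Phi=V^*\pi(O)'V=(id\otimes tr)\bigl(W^*\mathcal{L}(\mathcal{E})W\bigr)=(id\otimes tr)(\tilde{S_\Phi}).
\]
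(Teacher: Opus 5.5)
Your proof is correct and follows the paper's argument. The paper likewise uses the Stinespring representation coming from a Stinespring module, identifies $\pi(O)'$ with $\mathcal{L}(\mathcal{E})$, and then obtains $V^*(\cdot)V=(id\otimes tr)(W^*(\cdot)W)$ by factoring $V=W\circ i$ with $i(\xi)=\xi\otimes 1$ and checking $i^*(\cdot)i=id\otimes tr$, which is exactly your Step 2. The only difference is that the paper takes a minimal module and simply asserts $\pi(O)'=\mathcal{L}(\mathcal{E})$, whereas you fix the concrete module $(\mathcal{M}_\Phi,C_\Phi^{1/2})$ and actually verify that equality via the left/right regular representation, which usefully fills in a step the paper leaves implicit.
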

\begin{proof}
Let $(\mathcal{E},W)$ be a minimal Stinespring module related to $\Phi$ and $(\pi, \mathcal{E}, V)$ be the associated Stinespring representation for $\Phi^*$ (see proposition \ref{module_rep_correspondence}). We observe that, $$B(\mathcal{E})\supseteq \pi(O)'=\mathcal{L}(\mathcal{E}). $$ Therefore, the quantum confusability multigraph and the quantum confusability single-edged graph are given by,
\begin{equation*}
\tilde{S}_{\Phi}=W^*\mathcal{L}({\mathcal{E}})W\quad\text{and}\quad S_{\Phi}= V^*  \mathcal{L}({\mathcal{E}}) V 
\end{equation*}
Let us define a linear map $i: H_{in}\rightarrow \mathcal{M}_{\Phi}$ by $i(\xi)=\xi\otimes 1$. Equiping  $\mathcal{M}_{\Phi}$ with inner product $tr(\langle.,.\rangle_{O^{op}})$ we consider the adjoint $i^*:\mathcal{M}_{\Phi}\rightarrow H_{in}$. It is clear that the following diagram \ref{fig:a_conf_mult_to_simp}  commutes and therefore diagram \ref{fig:b_conf_mult_to_simp} also commutes.
\begin{figure}[h]
\centering

\begin{subfigure}{0.45\textwidth}
\centering
\begin{tikzcd}[column sep=4em, row sep=3em]
& \mathcal{E} \\
\mathcal{H}_{\mathrm{in}}
  \arrow[ur, "V"]
  \arrow[r, "i"']
& M_\phi
  \arrow[u, "W"']
\end{tikzcd}
\caption{}
\label{fig:a_conf_mult_to_simp}
\end{subfigure}
\hspace{1em}
\begin{subfigure}{0.45\textwidth}
\centering
\begin{tikzcd}[column sep=4em, row sep=3em]
& \mathcal{L}(\mathcal{E})
  \arrow[dl, "V^*(\cdot)V"']
  \arrow[d, "W^*(\cdot)W"] \\
B(\mathcal{H}_{\mathrm{in}})
& \mathcal{L}(M_\phi)
  \arrow[l, "i^*(\cdot)i"']
\end{tikzcd}
\caption{}
\label{fig:b_conf_mult_to_simp}
\end{subfigure}

\caption{single-edged confusability graph from confusaibility multigraph}
\label{conf_mult_to_simp}
\end{figure}

As $W^*\mathcal{L}(\mathcal{E})W=\tilde{S_{\Phi}}$, to prove the desired result, it is enough to show that 

\begin{align}\label{tr_evaluation_map_i}
    i^*(T)i=(id\otimes tr)(T)\quad \text{for all}\quad T\in \mathcal{L}(\mathcal{M}_{\Phi}).
\end{align}
For $\xi,\eta\in H_{in}$ and $b\in O^{op}$ we observe that, 
\begin{align*}
\langle \eta, i^*(\xi\otimes b )\rangle=tr (\langle \eta\otimes 1, \xi\otimes b\rangle_{O^{op}})=\langle \eta, \xi \rangle tr(b).
\end{align*}
Therefore $$i^*(\xi\otimes b)=\xi\: tr(b).$$
As $H_{in}=span\:\{e^i_a\:|\:i,a\}$, we write $B(H_{in})=span\:\{e^{ab}_{ij}\:|\:i,j,a,b\}$. Therefore any $T\in \mathcal{L}({M_{\Phi}})\cong B(H_{in})\otimes O^{op}$ can be written as $T=\sum_{i,j,a,b}e^{ab}_{ij}\otimes T^{ab}_{ij}$ where $T^{ab}_{ij}\in O^{op}$ for all $i,j,a,b$. For any $\xi\in H_{in}$, we observe that,
\begin{align*}
i^*(T)i(\xi)=i^*(T(\xi\otimes 1))&=i^*(\sum_{i,j,a,b}e^{ab}_{ij}(\xi)\otimes T^{ab}_{ij})\\
&=\sum_{i,j,a,b}e^{ab}_{ij}(\xi)tr(T^{ab}_{ij})=\big((id\otimes tr)(T)\big)(\xi).
\end{align*}
As $\xi\in H_{in}$ is arbitrary, equation \ref{tr_evaluation_map_i} follows.
\end{proof}
\subsection*{Kraus operators:} Every CP map between matrix algebras admit Kraus decomposition which are of special interests to quantum information theorists. Therefore it is important that we describe our confusability multigraph in terms of Kraus operators.
Given a quantum channel $\Phi:I\rightarrow O$, we describe the Kraus operators as follows:\\
We recall that, 
\begin{align*}
H_{in}=\bigoplus_a H^a_{in},\quad I=\bigoplus_a B(H^a_{in}), \quad H_{out}=\bigoplus_b H^b_{out},\quad O=\bigoplus_b B(H^b_{out}).
\end{align*}
By compressing both domain and range, we have a family of CP maps $\{\Phi_{ab}\in B(H^a_{in}, H^b_{out})\:|\:a,b\}$ and hence a family of Kraus operators $\{E_{abk}\:|\:k\}$ related to $\Phi_{ab}$. The map $\Phi:I\rightarrow O$ can be written as 
\begin{align*}
    \Phi(x)=\sum_{a,b,k} (j_b E_{abk} {i_a}^*) x ({i_a} {E_{abk}}^* {j_b}^*)
\end{align*}
where $i_a:H^a_{in}\rightarrow H_{in}$ and $j_b:{H^b_{out}}\rightarrow H_{out}$ are canonical inclusion maps. Renaming the indices $a,k$  of the Kraus operators, we simply assume that $\Phi:I\rightarrow O$ is of the form $$\Phi(x)=\sum_{b,k}j_b E_{bk} x {E_{bk}}^* {j_b}^*\quad\text{for all}\quad x\in I$$ where $E_{bk}:H_{in}\rightarrow H^b_{out}$.
\begin{thm}\label{multigraph_kraus_form}
    Let $\Phi:I\rightarrow O$ be a quantum channel and $\{E_{bk}:H_{in}\rightarrow H_{out}\:|\:b,k\}$ be a family of Kraus operators related to $\Phi$. The quantum confusability multigraph of the channel is given by,
    \begin{equation*}
        \tilde{S}_{\Phi}=span \:\{\sum_{i,a,j,a'} {e^{aa'}_{ij}}\otimes \theta_{(E_{bl}e^{a'}_j), (E_{bk}e^a_i)}\:|\:b,k,l\}\subseteq B(H_{in})\otimes O^{op}.
    \end{equation*}
\end{thm}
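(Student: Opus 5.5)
By Proposition \ref{mudule_indep_mult}, $\tilde{S}_\Phi=W^*\mathcal{L}(\mathcal{E})W$ does not depend on the Stinespring module, so the plan is to build one explicit Stinespring module $(\mathcal{E},W)$ from the Kraus family $\{E_{bk}\}$ and then compute $W^*\mathcal{L}(\mathcal{E})W$ directly. Throughout I will write $e_j$ for the basis vectors $e^a_j$ of $H_{in}$ and suppress the block index $a$ where it causes no confusion.

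\textbf{Step 1: rewrite the Choi-type invariant.} First I rewrite $C_\Phi$ from equation \ref{Choi_mat_formula} in Kraus form. Since $\Phi(e^a_{ji})=\sum_{b,k}E_{bk}e^a_{ji}E_{bk}^*$, we get
\[
\Phi(e^a_{ji})=\sum_{b,k}\theta_{E_{bk}e^a_j,\,E_{bk}e^a_i}.
\]
Each summand lies in the block $B(H^b_{out})$ of $O$, because $E_{bk}$ maps into $H^b_{out}$.

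\textbf{Step 2: construct the module and $W$.} For each $b$, fix a unit vector $g_b\in H^b_{out}$. Let $\mathcal{E}=\bigoplus_{b,k}O^{op}$, viewed as the right Hilbert $O^{op}$-module with inner product $\langle x,y\rangle=x^* * y = yx^*$ in each summand. Define $W\in\mathcal{L}(\mathcal{M}_\Phi,\mathcal{E})$ on the basis by
\[
W(e_j\otimes 1)=\bigl(\theta_{E_{bk}e_j,\,g_b}\bigr)_{b,k},
\]
and extend $O^{op}$-linearly. Adjointability is automatic here, since everything is finite dimensional and free. To check $W^*W=C_\Phi$, it suffices to compare $O^{op}$-valued inner products on basis vectors. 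On one side,
\[
\langle W(e_i\otimes1),W(e_j\otimes1)\rangle=\sum_{b,k}\theta_{E_{bk}e_j,g_b}\,\theta_{g_b,E_{bk}e_i}=\sum_{b,k}\theta_{E_{bk}e_j,E_{bk}e_i}.
\]
On the other side, $\langle e_i\otimes1,C_\Phi(e_j\otimes1)\rangle=\Phi(e_{ji})$. These agree by Step 1, so $(\mathcal{E},W)$ is a Stinespring module for $\Phi$.

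\textbf{Step 3: compute $W^*\mathcal{L}(\mathcal{E})W$.} Adjointable maps on $\bigoplus_{b,k}O^{op}$ are matrices $T=(t_{(b,k),(b',l)})$ whose entries act by left $O^{op}$-multiplication, that is, $\xi\mapsto t*\xi=\xi t$ in $O$. For such $T$, the matrix entries of $W^*TW$ in the basis $e_i\otimes1$ are
\[
\langle W(e_i\otimes1),TW(e_j\otimes1)\rangle=\sum \theta_{E_{b'l}e_j,g_{b'}}\;t\;\theta_{g_b,E_{bk}e_i},
\]
with the product taken in $O$. The middle factor $\theta_{g_{b'},\cdot}\,t\,\theta_{\cdot,g_b}$ collapses to a scalar $\langle g_{b'},t\,g_b\rangle$. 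This scalar vanishes unless $b=b'$, because $t\in O$ is block diagonal; and for $b=b'$ it is arbitrary, as $t$ ranges over $O$. Reassembling the entries via $\mathcal{L}(\mathcal{M}_\Phi)\cong B(H_{in})\otimes O^{op}$ gives the elements
\[
\sum_{i,a,j,a'}e^{aa'}_{ij}\otimes\theta_{E_{bl}e^{a'}_j,E_{bk}e^a_i}.
\]
These lie in $W^*\mathcal{L}(\mathcal{E})W$: choose $T$ to be the matrix unit at position $((b,k),(b,l))$ with entry $f^b_{kk}$-type element $t$ satisfying $\langle g_b,t\,g_b\rangle=1$. Conversely, every element of $W^*\mathcal{L}(\mathcal{E})W$ is a linear combination of them. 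This gives the stated span.

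\textbf{Main obstacle.} The delicate point is bookkeeping with the opposite multiplication. Three things must be handled consistently: the order of the factors in $\langle x,y\rangle=yx^*$, the fact that left multiplication in $O^{op}$ is right multiplication in $O$, and the placement of indices $i,j$ against $k,l$ in the rank-one operators. I would fix these conventions once, using Notation \ref{rank-1} and the identity $\theta_{x,y}\theta_{z,w}=\langle y,z\rangle\theta_{x,w}$. Then I would verify the single-entry computation in Step 3 carefully, since a transposition error there would swap $k$ and $l$ in the final formula.
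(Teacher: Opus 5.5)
Your strategy is the paper's: build an explicit Stinespring module from the Kraus family, check $W^*W=C_\Phi$, and compute the compression $W^*\mathcal{L}(\mathcal{E})W$, which by Proposition \ref{mudule_indep_mult} equals $\tilde S_\Phi$. What differs is the module.

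The paper takes $\mathcal{E}=\bigoplus_b H^b_{out}\otimes K_b$, with action $(\xi\otimes s)*T=T\xi\otimes s$, inner product $\theta_{\eta,\xi}$, and $W(\xi\otimes T)=\sum_{b,k}TE_{bk}\xi\otimes s_{bk}$. Then $\mathcal{L}(\mathcal{E})=\bigoplus_b id_b\otimes B(K_b)$ is the commutant of the output action, and the matrix units $id_b\otimes f^b_{kl}$ compress directly to the spanning elements.

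You take the free module $\bigoplus_{b,k}O^{op}$ and encode $\xi\in H^b_{out}$ as $\theta_{\xi,g_b}$. Since $\theta_{\eta,g_b}\theta_{g_b,\xi}=\theta_{\eta,\xi}$ and $a\,\theta_{\xi,g_b}=\theta_{a\xi,g_b}$, the paper's module is exactly the submodule $\{(\theta_{\xi_{bk},g_b})_{b,k}\}$ of yours. That submodule contains the range of your $W$, so your module is a non-minimal enlargement of the paper's. This gives you the free-module conveniences: adjointability and $\mathcal{L}(\mathcal{E})\cong M_n(O^{op})$ are immediate. The price is the auxiliary vectors $g_b$ and a genuinely non-minimal module, whose extra operators collapse under compression to the scalars $\langle g_b,t\,g_b\rangle$. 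Your Step 3 computation is correct, including the index placement $\theta_{E_{bl}e_j,E_{bk}e_i}$.

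One step, however, relies on a hypothesis you hid when you suppressed the block index. In Step 2, take $e^a_i$ and $e^{a'}_j$ with $a\neq a'$. Then $\langle e^a_i\otimes 1,C_\Phi(e^{a'}_j\otimes 1)\rangle=0$, because $C_\Phi=\sum_{i,j,a}e^a_{ij}\otimes\Phi(e^a_{ji})$ has only diagonal-block entries ($\Phi$ is defined only on $I$). On the other side, $\langle W(e^a_i\otimes 1),W(e^{a'}_j\otimes 1)\rangle=\sum_{b,k}\theta_{E_{bk}e^{a'}_j,\,E_{bk}e^a_i}$. These agree only because each $E_{bk}$ vanishes on $(H^{a''}_{in})^\perp$ for a single $a''$. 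This is the block form built into the Kraus family constructed just before the theorem, and the paper invokes it explicitly at exactly this point.

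It is not automatic for an arbitrary Kraus family of $\Phi$ on $I$. Take $I=\mathbb{C}\oplus\mathbb{C}\subseteq B(\mathbb{C}^2)$, $O=\mathbb{C}$ and $\Phi(x)=\mathrm{tr}(x)$. The single operator $E=\langle e_1+e_2,\cdot\,\rangle$ satisfies $ExE^*=\Phi(x)$ for all $x\in I$. Yet your $W$ gives $W^*W=\sum_{i,j}e_{ij}\otimes 1\neq 1=C_\Phi$. The displayed formula would then return the span of the all-ones matrix rather than $\tilde S_\Phi=B(\mathbb{C}^2)\otimes 1$. State the block form and use it in Step 2; with that, your proof is complete.
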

\begin{proof}
For each $b$, where choose a Hilbert space $K_b$ with dimension $|\{E_{bk}\:|\:k\}|$, that is the number of Kraus operators corresponding to the index $b$. For a fixed $b$, let $\{s_{bk}\:|\:k\}$ is an orthonormal basis of $K_b$. We define a  Hilbert $O^{op}$ module $\mathcal{E}$ by the following description:
\begin{enumerate}
    \item As a vector space $\mathcal{E}=\bigoplus_b \big(H^b_{out}\otimes K_b\big)$.
    \item The right $O^{op}$ multiplication on $\mathcal{E}$ is given by
    $$ (\sum_{b,k}\xi_{bk}\otimes s_{bk})*T:= (\sum_{b,k}T(\xi_{bk})\otimes s_{bk})$$
    where $T\in O^{op}$ and $\xi_{bk}\in H^b_{out}$.
    \item The $O^{op}$ valued inner product on $\mathcal{E}$ is given by,
    $$\langle \sum_{b,k}\xi_{bk}\otimes s_{bk}, \sum_{b,k}\eta_{bk}\otimes s_{bk}\rangle_{O^{op}}:=\sum_{b,k}\theta_{\eta_{bk},\xi_{bk}}$$
    where $\theta_{\eta_{bk},\xi_{bk}}$'s are rank one operators on $H_{out}$ described in notation \ref{rank-1}.
\end{enumerate}
We define an $O^{op}$-linear map $W:\mathcal{M}_{\Phi}\rightarrow \mathcal{E}$ by,
\begin{align*}
 W(\xi\otimes T)= \sum_{b,k} T E_{bk}\xi \otimes s_{bk}.    
\end{align*}
\textbf{Claim:} The pair $(\mathcal{E}, W)$ is a Stinespring module related to $\Phi$.\\
\textbf{Proof of claim:}
To show that the pair $(\mathcal{E},W)$ is a Stinespring module, we need to show that $W^*W=C_{\Phi}$.
As an element of $B(H_{in})\otimes O^{op}$, using equation \ref{Choi_mat_formula} and identities in notation \ref{rank-1} we observe that,
\begin{align}\label{Choi_mat_kraus_form}
 C_{\Phi}=\sum_{i,j,a}e^{a}_{ij}\otimes \Phi(e^{a}_{ji})=\sum_{i,j,a}e^{a}_{ij}\otimes (\:\sum_{b,k}\theta_{(E_{bk}e^{a}_{j}), (E_{bk}e^{a}_{i})}\:).   
\end{align}
We investigate how the map $W^*:\mathcal{E}\rightarrow \mathcal{M}_{\Phi}$ looks like: For any $\Gamma\in \mathcal{E}$ and $\xi\in H_{in}$,
\begin{align*}
\langle \xi\otimes 1,\: W^*(\Gamma) \rangle_{O^{op}}= \langle \sum_{b,k} E_{bk}(\xi)\otimes s_{bk},\: \Gamma\rangle_{O^{op}}=\sum_{b,k}\theta_{(\Gamma_{bk}), (E_{bk}\xi)}
\end{align*}
where $\Gamma=\sum_{b,k} \Gamma_{bk}\otimes s_{bk}$. Taking $\xi=e^a_i$, we get
\begin{align}\label{W*_form}
W^*(\Gamma)=\sum_{i,a}e^a_i\otimes (\:\sum_{b,k}\theta_{(\Gamma_{bk}), (E_{bk}e^a_i)}\:).
 \end{align}
Therefore,
\begin{align*}
W^*W(e^a_j\otimes 1)=W^*(\sum_{b,k}E_{bk}e^a_j\otimes s_{bk})&=\sum_{b,k}(\:\sum_{i,a'}e^{a'}_i\otimes \theta_{(E_{bk}e^a_j),(E_{bk}e^{a'}_i)}\:)\\
&=\sum_{i}e^a_i\otimes (\:\sum_{b,k}\theta_{(E_{bk}e^a_j),(E_{bk}e^a_i)}\:)
\end{align*}
Hence, as an element of $B(H_{in})\otimes O^{op},\:\:\: W^*W=C_{\Phi}$. Here we have exploited the block form of Kraus operators, that is, for each $E_{bk}$, $Ker(E_{bk})$ contains $(H^{a'}_{in})^{\perp}\subseteq H_{in}$ for some $a'$.\\
It follows that,
\begin{align*}
   \mathcal{L}(\mathcal{E})=\bigoplus_b (id_b\otimes B(K_b))=\bigoplus_b span\:\{id_b\otimes f^b_{kl}\:|\:k,l\} 
\end{align*}
 where $id_b:H^b_{out}\rightarrow H^b_{out}$ is the identity map and $f^b_{kl}\in B(K_b)$ is the elementary matrix which sends $s_{bl}$ to $s_{bk}$.
For $\xi\otimes 1 \in \mathcal{M}_{\Phi}$, using \ref{W*_form}, we observe that,
\begin{align*}
W^*(id_b\otimes f^b_{kl})W(\xi\otimes 1)&=W^*(id_b\otimes f^b_{kl})(\sum_{b',k'}E_{b'k'}\xi\otimes s_{b'k'})\\
&=W^*(E_{bl}\xi\otimes s_{bk})\\
&=\sum_{i,a}e^a_i \otimes \theta_{(E_{bl}\xi),(E_{bk}e^a_i)}.
\end{align*}
By taking $\xi=e^{a'}_j$, we get $$W^*(id_b\otimes f^b_{kl})W(e^{a'}_j\otimes 1)=\sum_{i,a}e^a_i \otimes \theta_{(E_{bl}e^{a'}_j),(E_{bk}e^a_i)}.$$
Therefore the quantum confusability multigraph related to the quantum channel $\Phi$ is given by,
\begin{align*}
    \tilde{S}_{\Phi}=span\:\{\sum_{i,a,j,a'}e^{aa'}_{ij}\otimes \theta_{(E_{bl}e^{a'}_j),(E_{bk}e^a_i)}\:|\: b,k,l\}\subseteq B(H_{in})\otimes O^{op}.
\end{align*}
\end{proof}
 
\begin{rem}
Theorem \ref{multigraph_kraus_form} has the following consequences:
\begin{enumerate}
    \item The confusability multigraph $\tilde{S}_{\Phi}$ is an $(I'\otimes 1)-(I'\otimes 1)$ bimodule where $I'$ is the commutator of $I$ in $B(H_{in})$.
    \item As $O=\bigoplus_b B(H^b_{out})$, we write $$B(H_{in})\otimes O^{op}=\bigoplus_b B(H_{in})\otimes B(H^b_{out})^{op},\quad \Phi_b(x):={j_b}^*\Phi(x)j_b$$
where $j_b:H^b_{out}\rightarrow H_{out}$  is the canonical inclusion map. The quantum confusability multigraph $\tilde{S}_{\Phi}$ admits the following decomposition:
\begin{align*}
\tilde{S}_{\Phi}=\bigoplus_b\: \tilde{S}_{\Phi_b}
\end{align*}
\item It is evident that the quantum confusability multigraph has an algebra (possibly non-unital) structure, that is, $$(\tilde{S}_{\Phi})^2\subseteq (\tilde{S}_{\Phi})\quad \text{and} \quad \tilde{S}^*_{\Phi}=\tilde{S}_{\Phi}.$$ \par
In classical scenarios, that is, in the context of a classical information channel, this algebra structure conveys that, if two inputs $x_1$ and $x_2$ are confused through an output $y$, and same happens for another pair $x_2$ and $x_3$, then $x_1$ and $x_3$ are bound to be confused through the same output $y$. Later in section \ref{decomp_mult}, we will see that any \textbf{symmetric decomposable multi-relation or multigraph} admits this transitive structure and there is a completely positive map whose confusability multigraph coincides with it.
\end{enumerate}

\end{rem}
We apply theorem \ref{multigraph_kraus_form} in two special cases:\\
\textbf{The classical case: $I=l^{\infty} (X)$ and $O=l^\infty (Y)$:}
A classical information channel is given by the following map:
\begin{align*}
    \Phi(e_{xx})=\sum_{y\in Y}p(y\:|\:x) f_{yy}
\end{align*}
where $p(y|x)$ is the probability getting the output $y$ upon receiving input $x$. The Kraus operators $E_{xy}:l^{2} (X)\rightarrow span\{e_{y}\}$ is a rank one operator given by, 
\begin{align*}
E_{xy} (e_{x'}) = \delta_{xx'} p(y|x)^{\frac{1}{2}} f_{y} \quad\text{for all}\quad x\in X
\end{align*}
Therefore the quantum confusability multigraph $\tilde{S}_{\Phi}\subseteq B(l^2(X))\otimes l^{\infty}(Y)$ is given by,
\begin{align*}
\tilde{S}_{\Phi}&=span\:\{e_{x_1x_2}\otimes \theta_{(E_{xy} e_{x_1}), (E_{x'y} e_{x_2})}\:|\:x_1,x_2,x, x' \in X, y\in Y\}\\&
= span\: \{e_{x_1x_2}\otimes p(y|x_1)^{\frac{1}{2}}p(y|x_2)^{\frac{1}{2}} f_{yy}\:|\:x_1,x_2\in X, y\in Y\}\\&
=span\: \{e_{x_1x_2}\otimes f_{yy}\:|\:x_1,x_2\in X, y\in Y\:\:\text{such that}\:\:p(y|x_1)p(y|x_2)\neq 0\}
\end{align*}

\textbf{The purely quantum case:  $I=B(H_{in}), O=B(H_{out})$:} 
The quantum confusability multigraph $\tilde{S}_{\Phi}$ is given by,
\begin{align*}
    \tilde{S}_{\Phi}=span\:\{\sum_{i,j} e_{ij}\otimes \theta_{(E_le_j),(E_ke_i)}\:|\:k,l\}\subseteq B(H_{in})\otimes B(H_{out})^{op}
\end{align*}
where $E_k:H_{in}\rightarrow H_{out}$'s are Kraus operators related to $\Phi$. Under the isomorphism $(id\otimes \pi_{op})$, $B(H_{in})\otimes B(H_{out})^{op}$ is isomorphic to $B(H_{in})\otimes B(\overline{H_{out}})$. It further follows that, 
\begin{align*}
    (id\otimes \pi_{op})(\tilde{S}_{\Phi})=span\:\{\sum_{i,j}e_{ij}\otimes \theta_{(\overline{E_ke_i}),(\overline{E_le_j})}\:|\:k,l\}\subseteq B(H_{in})\otimes B(\overline{H_{out}})
\end{align*}
Before proceeding further, we introduce the $vec$ and $mat$ functions.
\begin{nota}
    Let $H$ and $K$ be finite dimensional Hilbert spaces. We define $vec:B(K,H)\rightarrow H\otimes \overline{K}$ and $mat:H\otimes \overline{K}\rightarrow B(K,H)$ via the following description: for $\xi\in H$ and $\eta\in K$, 
    \begin{align*}
    vec(\theta_{\xi,\eta})=\xi\otimes \overline{\eta}\quad \text{and}\quad mat(\xi\otimes \overline{\eta})=\theta_{\xi,\eta}.
    \end{align*}
\end{nota}
Using these Hilbert space isomorphisms, we observe that, 
\begin{align*}
    B(H_{in})\otimes B(\overline{H_{out}}) \cong H_{in}\otimes \overline{H_{in}} \otimes \overline{H_{out}}\otimes H_{out}
   &\cong H_{in}\otimes \overline{H_{out}}\otimes \overline{H_{in}} \otimes  H_{out}
\end{align*}
All the above isomorphisms are isomorphisms of Hilbert spaces. Operator spaces are considered as Hilbert spaces with respect to Hilbert Schmidt inner product. Under this transformation, The quantum confusability multigraph is described as follows:
\begin{align}\label{quant_mult_kraus_form_2}
    (id\otimes \pi_{op})\tilde{S}_{\Phi}\cong span\:\{vec({E_k}^*)\otimes \overline{vec({E_l}^*)}\:|\:k,l\}.
\end{align}
We observe the following:
\begin{prop}\label{quant_confus_mult_alg_str_prop}
 Let $\Phi:B(H_{in})\rightarrow B(H_{out})$ be a quantum channel. Then following isomorphism of vector spaces holds:
 \begin{align*}
     (id\otimes \pi_{op})(\tilde{S}_{\Phi})\cong V\otimes \overline{V}
 \end{align*}
 where $V=Range
 \: \big((id\otimes \pi_{op})(C_{\Phi})\big)$ and $(id\otimes \pi_{op})(C_{\Phi})$ is seen as linear operator on $H_{in}\otimes \overline{H_{out}}$.
\end{prop}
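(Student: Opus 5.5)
Starting from equation \eqref{quant_mult_kraus_form_2}, the statement reduces to identifying the span of the vectors $vec(E_k^*)$ with $V$. Equation \eqref{quant_mult_kraus_form_2} gives $(id\otimes \pi_{op})(\tilde{S}_{\Phi})\cong span\{vec(E_k^*)\otimes \overline{vec(E_l^*)}\:|\:k,l\}$. The span of elementary tensors $x_k\otimes \overline{x_l}$ over all pairs $k,l$ is exactly $U\otimes\overline{U}$, where $U=span\{x_k\}$. So it suffices to prove
\begin{align*}
Range\big((id\otimes\pi_{op})(C_{\Phi})\big)=span\{vec(E_k^*)\:|\:k\}\subseteq H_{in}\otimes \overline{H_{out}}.
\end{align*}

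First I compute $\tilde{C}_{\Phi}:=(id\otimes\pi_{op})(C_{\Phi})$ in terms of the Kraus operators. From equation \eqref{Choi_mat_kraus_form}, $C_{\Phi}=\sum_{i,j}e_{ij}\otimes\sum_k\theta_{E_ke_j,E_ke_i}$ in the purely quantum case. Remark \ref{opp_action} gives $\pi_{op}(\theta_{\xi,\eta})\overline{\zeta}=\overline{\theta_{\eta,\xi}\zeta}=\overline{\langle\xi,\zeta\rangle\eta}$, so $\pi_{op}(\theta_{\xi,\eta})=\theta_{\overline{\eta},\overline{\xi}}$. Hence
\begin{align*}
\tilde{C}_{\Phi}=\sum_k\sum_{i,j}\theta_{e_i,e_j}\otimes\theta_{\overline{E_ke_i},\overline{E_ke_j}}=\sum_k\theta_{x_k,x_k},\qquad x_k:=\sum_i e_i\otimes\overline{E_ke_i}.
\end{align*}
This is a sum of positive rank-one operators. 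The range of $\sum_k\theta_{x_k,x_k}$ is $span\{x_k\}$: the inclusion $\subseteq$ is clear, and for the reverse, any vector orthogonal to the range satisfies $\sum_k|\langle x_k,\zeta\rangle|^2=0$.

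The remaining step, which I expect to be the main bookkeeping obstacle, is to identify $x_k$ with $vec(E_k^*)$ under the paper's conventions. Writing $E_k^*=\sum_i\theta_{e_i,E_ke_i}$ (check: $\sum_i\langle E_ke_i,\eta\rangle e_i=\sum_i\langle e_i,E_k^*\eta\rangle e_i=E_k^*\eta$), we get $vec(E_k^*)=\sum_i e_i\otimes\overline{E_ke_i}=x_k$. I would verify that this agrees with the conventions used in deriving \eqref{quant_mult_kraus_form_2}; if that equation is read with some other identification, it may instead produce $vec(E_l^*)\otimes\overline{vec(E_k^*)}$, but the span over all $k,l$ is symmetric, so this is harmless.

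Combining the steps, $(id\otimes\pi_{op})(\tilde{S}_\Phi)\cong span\{x_k\otimes\overline{x_l}\}=V\otimes\overline{V}$, where the last equality uses the fact that $\{x_k\}$ spans $V$, together with bilinearity. Everything is finite dimensional, so no closure issues arise.
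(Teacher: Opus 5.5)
Your proof is correct, but it runs in the opposite direction from the paper's.

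\textbf{The paper's route.} The paper starts from $(id\otimes\pi_{op})(C_{\Phi})$ and spectrally decomposes it as $\sum_\alpha\theta_{\Gamma_\alpha,\Gamma_\alpha}$, with mutually orthogonal $\Gamma_\alpha$ spanning the range. It then uses the Choi--Jamiolkowski inversion \eqref{choi_jamil_rep} to show that $\Lambda_\alpha=mat(\Gamma_\alpha)^*$ is a Kraus family for $\Phi$. Only after that does it apply \eqref{quant_mult_kraus_form_2} to this particular family, where $vec(\Lambda_\alpha^*)=\Gamma_\alpha$ holds by construction.

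\textbf{Your route.} You take an arbitrary Kraus family and push it forward through \eqref{Choi_mat_kraus_form}, using $\pi_{op}(\theta_{\xi,\eta})=\theta_{\overline{\eta},\overline{\xi}}$. This gives $(id\otimes\pi_{op})(C_{\Phi})=\sum_k\theta_{vec(E_k^*),vec(E_k^*)}$. You then use the elementary fact that the range of a finite sum of positive rank-one operators is the span of their vectors.

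\textbf{Comparison.} Your argument needs neither the spectral theorem nor the inverse Choi--Jamiolkowski computation. It works for Kraus families that are linearly dependent or non-orthogonal. It also shows directly that $span\{vec(E_k^*)\}$ is the same for every Kraus family of $\Phi$, which is what Remark \ref{decomp_rem} implicitly relies on when it identifies the result with the Kraus space. The paper's route gets something different in exchange: it manufactures an orthogonal Kraus decomposition out of the positivity of the Choi-type invariant, essentially re-proving Choi's theorem in this setting, instead of starting from a given Kraus family.
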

\begin{proof}
 Any linear map $\Phi:B(H_{in})\rightarrow B(H_{out})$ can be recovered from its Choi-type invariant $C_{\Phi}\in B(H_{in})\otimes B(H_{out})^{op}$ using Choi-Jamilkowski representation. For $e_{i'j'}\in B(H_{in})$, we observe that,
    \begin{align}
     (tr\otimes id)\big( (e_{i'j'}\otimes 1)C_{\Phi})&=\sum_{i,j}(tr\otimes id)\big((e_{i'j'}\otimes 1)(e_{ij}\otimes \Phi(e_{ji}))\big) \notag \\
     &=\sum_{j}(tr\otimes id) \big( e_{i'j}\otimes \Phi(e_{jj'}) \big)=\Phi(e_{i'j'})\label{choi_jamil_rep}
    \end{align}
    We note that the element $\Phi(e_{i'j'})$ in the above expression lies in $B(H_{out})^{op}$ which is the same vector space as $B(H_{out})$. Moreover, proposition \ref{CP_module_correspondence} and the following remark ensure that $\Phi$ is CP iff $C_{\Phi}$ is a positive element in $B(H_{in})\otimes B(H_{out})^{op}$. \par
    Using spectral theorem for positive operators, we observe that,
    \begin{align*}
       (id\otimes \pi_{op}) (C_{\Phi})=\sum_{\alpha} \theta_{\Gamma_\alpha, \Gamma_{\alpha}}
    \end{align*}
    where $\{\Gamma_{\alpha}\:|\:\alpha\}\in H_{in}\otimes \overline{H_{out}}$ is a set of mutually orthogonal vectors and $$span\:\{\Gamma_{\alpha}\:|\:\alpha\}=Range\:\big((id\otimes \pi_{op})C_{\Phi}\big).$$
    It follows that, for $e_{i'j'}\in B(H_{in})$,
    \begin{align}
         (tr\otimes id)\big((e_{i'j'}\otimes 1)\sum_{\alpha}\theta_{\Gamma_{\alpha},\Gamma_{\alpha}}\big)&= \sum_{k,\alpha}(tr\otimes id) \big( \theta_{(e_{i'}\otimes \overline{f_k}), (e_{j'}\otimes \overline{f_k})}(\theta_{\Gamma_{\alpha},\Gamma_{\alpha}})\big) \notag\\
         &=\sum_{k,\alpha,k'} \overline{(\Gamma_{\alpha})_{i'k'}} (\Gamma_{\alpha})_{j'k}  \overline{f_{kk'}}\label{choi_jamil_rep_2}.
    \end{align}
    In the above computation, we have assumed that $\{\overline{f_k}\:|\:k\}$ is an orthonormal basis of $\overline{H_{out}}$ and therefore the set $\{\overline{f_{kl}}:=\theta_{\overline{f_k},\overline{f_l}}\:|\:k,l\}$ is a canonical basis of $B(\overline{H_{out}})$ consisting of elementary matrices.\\
    From equations $\ref{choi_jamil_rep}$ and \ref{choi_jamil_rep_2}, it follows that,
    \begin{align*}
    \Phi(e_{i'j'})&=\pi^{-1}_{op}\bigg( (tr\otimes id)\big((e_{i'j'}\otimes 1)\sum_{\alpha}\theta_{\Gamma_{\alpha},\Gamma_{\alpha}}\big)\bigg)\\
    &=\sum_{k,\alpha,k'} \overline{(\Gamma_{\alpha})_{i'k'}} (\Gamma_{\alpha})_{j'k}  f_{k'k}=\sum_{\alpha}\theta_{(\Lambda_{\alpha} e_{i'}),(\Lambda_{\alpha} e_{j'})}=\sum_{\alpha} \Lambda_{\alpha} (e_{i'j'}) \Lambda^*_{\alpha}  
 \end{align*}
 where $\Lambda_{\alpha}:H_{in}\rightarrow H_{out}$'s are linear operators defined by $\Lambda_{\alpha}=mat(\Gamma_{\alpha})^*$. As the family $\{\Lambda_{\alpha}\:|\:\alpha\}$ gives Kraus decomposition for the CP map $\Phi$, using equation \ref{quant_mult_kraus_form_2}, we observe that, 
 \begin{align*}
    (id\otimes \pi_{op}) \tilde{S}_{\Phi}&=span\:\{vec(\Lambda^*_{\alpha})\otimes \overline{vec({\Lambda^*_{\beta}})}\:|\:\alpha,\beta\}\\&
    =span\:\{\Gamma_{\alpha}\otimes \overline{\Gamma_{\beta}}\:|\:\alpha,\beta\}=V\otimes \overline{V}
 \end{align*}
 where $V=span\:\{\Gamma_{\alpha}\:|\:\alpha\}=Range\big((id\otimes \pi_{op})C_{\Phi}\big)$.
\end{proof}

We conclude this section with the following remark.

\begin{rem}\label{decomp_rem}
    The Hilbert space $\overline{H_{in}}\otimes H_{out}$ can be identified with $B(H_{in}, H_{out})$ via the map $\overline{\xi}\otimes \eta \mapsto \theta_{\eta, \xi}$ where $\xi\in H_{in}$ and $\eta\in H_{out}$. Similarly, there is a Hilbert space isomorphism between $H_{in}\otimes \overline{H_{out}}$ and $B(H_{out}, H_{in})$. Under these Hilbert space isomorphisms, using proposition \ref{quant_confus_mult_alg_str_prop}, we get
    \begin{align*}
         (id\otimes \pi_{op})\tilde{S}_{\Phi}\cong \mathcal{K}^* \otimes \mathcal{K}
    \end{align*}
    where $\mathcal{K}=span\{{E_k}\:|\:k\}$ is the Kraus space of the quantum channel $\Phi$.
\end{rem}
\section{\textbf{Quantum Multi-relations}}\label{quant_mult}
Let $X$ and $Y$ be two finite sets. A \textbf{multi-relation} $R$ on this pair $(X,Y)$ given by a subset
\begin{align*}
    R\subseteq X\times X\times Y.
\end{align*}
For $(x_1, x_2, y)\in R$, we say that ``$x_1$ and $x_2$ are related through $y$".\par 
In the spirit of Weaver's work on quantum relations (\cite{MR2908249}) and the discussions in previous section, we describe ``quantum multi-relations" on a pair of finite dimensional algebras $(M,N)$. 

\begin{defn}\label{defn_multirelation}
    Let $M\subseteq B(H)$ and $N\subseteq B(K)$ be two finite dimensional algebras, A \textbf{quantum multi-relation} or a \textbf{multigraph} on the pair $(M,N)$ is given by a subspace $V\subseteq B(H\otimes K)$ such that following conditions hold:
    \begin{enumerate}
    \item $V\subseteq B(H) \otimes N$.
        \item $V$ is an $(M'\otimes 1)-(M'\otimes 1)$ bimodule in $B(H\otimes K)$.
        \item $(1\otimes Z(N))(V)\subseteq V$ where $Z(N)=N\cap N'$, that is, the center of $N$.
\end{enumerate}
A multi-relation $V$ on $(M,N)$  is called \textbf{symmetric} if $V^*=V$ and \textbf{transitive} if $V^2\subseteq V$. 
\end{defn}
 Proposition \ref{classical_multi} establishes a bijection between quantum multi-relations and classical multi-relations on a pair of finite sets.
\begin{prop}\label{classical_multi}
Given a pair of finite sets $(X,Y)$ we consider the von Neumann algebras $l^{\infty}(X)\subseteq B(l^2(X))$ and $l^{\infty}(Y)\subseteq B(l^2(Y))$. Given a multi-relation $R\subseteq X\times X\times Y$, we define $V_{R}\subseteq B(l^2(X))\otimes l^{\infty}(Y)$ by,
\begin{align*}
V_{R}=\:span\:
\{e_{x_1x_2}\otimes e_{yy}\:|\:(x_1,x_2,y)\in R\}
\end{align*}
Conversely, for a quantum multi-relation $V\subseteq B(l^2(X))\otimes l^{\infty}(Y)$ we define $R_V\subseteq X\times X\times Y$ by,
\begin{align*}
R_V=\{(x_1,x_2,y)\:|\: \exists\: T\in V\:\:\text{such that}\:\: \langle e_{x_1}\otimes e_y,T(e_{x_2}\otimes e_y)\rangle\neq 0 \}
\end{align*}
These assignments are inverses of each other and establish a bijective correspondence between quantum multi-relations on the pair of algebras $(l^{\infty}(X),l^{\infty}(Y))$ and multi-relations on the pair of sets $(X,Y)$.
\end{prop}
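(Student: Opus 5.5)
The plan is to verify three things in order: $V_R$ is a quantum multi-relation, $R_{V_R}=R$, and $V_{R_V}=V$. Throughout I use that $l^{\infty}(X)'=l^{\infty}(X)$ in $B(l^2(X))$ and $Z(l^{\infty}(Y))=l^{\infty}(Y)$. So the conditions of Definition \ref{defn_multirelation} say that $V\subseteq B(l^2(X))\otimes l^{\infty}(Y)$ is a bimodule over the diagonal projections $e_{xx}\otimes 1$ acting on both sides. It is also invariant under left multiplication by $1\otimes e_{yy}$, and hence by $e_{x_1x_1}\otimes e_{yy}$.

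First, $V_R$ clearly lies in $B(l^2(X))\otimes l^{\infty}(Y)$. Products such as $(e_{xx}\otimes 1)(e_{x_1x_2}\otimes e_{yy})(e_{x'x'}\otimes 1)$ and $(1\otimes e_{y'y'})(e_{x_1x_2}\otimes e_{yy})$ are either $0$ or the same spanning element. Hence $V_R$ satisfies all three conditions.

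Next, compute $R_{V_R}$. For $T=\sum_{(x_1,x_2,y)\in R}c_{x_1x_2y}\,e_{x_1x_2}\otimes e_{yy}$ we have $\langle e_{x_1}\otimes e_y, T(e_{x_2}\otimes e_y)\rangle=c_{x_1x_2y}$. This coefficient can be nonzero, namely by taking $T$ to be the basis element itself, exactly when $(x_1,x_2,y)\in R$. So $R_{V_R}=R$.

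The main step is $V_{R_V}=V$. Take any $T\in V$ and write $T=\sum_{x_1,x_2,y}c_{x_1x_2y}\,e_{x_1x_2}\otimes e_{yy}$. This expansion is possible because $T\in B(l^2(X))\otimes l^{\infty}(Y)$, and $c_{x_1x_2y}=\langle e_{x_1}\otimes e_y,T(e_{x_2}\otimes e_y)\rangle$. By the bimodule and centre conditions,
\[
(e_{x_1x_1}\otimes 1)(1\otimes e_{yy})\,T\,(e_{x_2x_2}\otimes 1)=c_{x_1x_2y}\,e_{x_1x_2}\otimes e_{yy}\in V.
\]
Hence whenever $c_{x_1x_2y}\neq 0$ we get both $(x_1,x_2,y)\in R_V$ and $e_{x_1x_2}\otimes e_{yy}\in V$. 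The first fact gives $T\in V_{R_V}$, so $V\subseteq V_{R_V}$. For the reverse inclusion, each $(x_1,x_2,y)\in R_V$ is witnessed by some $T\in V$ with $c_{x_1x_2y}\neq0$, and the second fact gives $e_{x_1x_2}\otimes e_{yy}\in V$, so $V_{R_V}\subseteq V$.

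The only delicate point is this use of the centre condition (3) to isolate the $y$-component. Without it, $V$ could contain combinations mixing different $y$'s and the correspondence would fail. That is exactly why condition (3) appears in the definition.
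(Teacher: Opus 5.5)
Your proposal is correct and follows essentially the same argument as the paper. You expand $T\in V$ in the matrix units $e_{x_1x_2}\otimes e_{yy}$, isolate each coefficient as $(e_{x_1x_1}\otimes e_{yy})\,T\,(e_{x_2x_2}\otimes 1)$ using conditions (2) and (3), and conclude $V_{R_V}=V$, with $R_{V_R}=R$ a direct check. You are slightly more explicit than the paper: you verify $V\subseteq V_{R_V}$ and $V_{R_V}\subseteq V$ separately, and you use the correct condition $c_{x_1x_2y}\neq 0$ where the paper's text has ``$\geq 0$''.
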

\begin{proof}
It is easy to see that for a multi-relation $R\subseteq X\times X\times Y$, $V_R$ is a quantum multi-relation on $(l^{\infty}(X), l^{\infty}(Y))$. Conversely, for any quantum multi-relation $V\subseteq B(l^2(X))\otimes l^{\infty}(Y)$, any $T\in V$ can be written as
\begin{align}\label{classical_T_decomp_matrix_elements}
T=\sum_{\substack{x_1,x_2\in X\\y\in Y}} \langle e_{x_1}\otimes e_y, T(e_{x_2}\otimes e_y)\rangle (e_{x_1x_2}\otimes e_{yy})
\end{align}
As $V$ satisfies (2) and (3) of definition \ref{defn_multirelation}, for any $x_1,x_2\in X, y\in Y$, 
\begin{align*}
    (e_{x_1x_1}\otimes e_{yy})T(e_{x_2x_2}\otimes 1)=\langle e_{x_1}\otimes e_y, T(e_{x_2}\otimes e_y)\rangle (e_{x_1x_2}\otimes e_{yy})\in V.
\end{align*}
If $\langle e_{x_1}\otimes e_y, T(e_{x_2}\otimes e_y)\rangle\geq 0$, for some $x_1, x_2$ and $y$, then $e_{x_1x_2}\otimes e_{yy}\in V$. We further conclude that $V_{R_V}\subseteq V$. As any $T\in V$ is of the form \ref{classical_T_decomp_matrix_elements}, we conclude that $V_{R_V}=V$. \par
Given a multi-relation $R\subseteq X\times X\times Y$, it is easy to see that $R_{V_{R}}=R$. Therefore the bijective correspondence is established.
\end{proof}

Theorem \ref{embedding_indep} suggests that multi-relations on a pair of finite dimensional algebras $(M,N)$ are representation independent.
\begin{thm}\label{embedding_indep}
Let $M_i\subseteq B(H_i)$ and $N_i\subseteq B(K_i)$, $i=1,2$ be two pairs of finite dimensional algebras such that $M_1\cong M_2$ and $N_1\cong N_2$. There is a $1-1$ correspondence between quantum multi-relations on the pairs $(M_1, N_1)$ and $(M_2,N_2)$.
\end{thm}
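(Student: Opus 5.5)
Following Weaver's proof that quantum relations are representation independent, I will reduce to one standard form. By structure theory, any faithful unital representation of a finite-dimensional $C^*$-algebra $M_1\cong M_2$ is unitarily equivalent to an amplification. Concretely, if $M\cong\bigoplus_a B(\mathbb{C}^{n_a})$ sits in $B(H)$, then $H\cong\bigoplus_a \mathbb{C}^{n_a}\otimes\mathbb{C}^{p_a}$ with $M=\bigoplus_a B(\mathbb{C}^{n_a})\otimes 1$ and $M'=\bigoplus_a 1\otimes B(\mathbb{C}^{p_a})$, with multiplicities $p_a\ge1$. The same holds for $N\subseteq B(K)$. If the representations are not unital, I handle the complement of the unit of $M$ (resp.\ $N$): there $M'$ contains the full corner $B((1-p)H)$, which forces the corresponding blocks of $V$ to be determined by the remaining blocks. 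It therefore suffices to compare two pairs $(M,N)$ and $(M,\tilde N)$, or $(M,N)$ and $(\tilde M,N)$, that differ by a unitary conjugation or a change of multiplicities. The correspondence for general pairs is then the composite of the first-variable correspondence and the second-variable correspondence.

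\textbf{First variable (changing $H$).} Let $M\subseteq B(H)$ be in standard form. I will show that multi-relations $V$ on $(M,N)$ correspond bijectively to subspaces $V_0\subseteq B(H_0)\otimes N$, where $H_0=\bigoplus_a\mathbb{C}^{n_a}$ is the multiplicity-free representation, satisfying conditions (2) and (3) of Definition \ref{defn_multirelation} with $M_0'=Z(M)$. The map is $V\mapsto (P^*\otimes 1)V(P\otimes 1)$, where $P:H_0\to H$ is an isometry onto $\bigoplus_a \mathbb{C}^{n_a}\otimes e_1$. The inverse sends $V_0$ to the $(M'\otimes1)$-bimodule generated by $(P\otimes1)V_0(P^*\otimes1)$. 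The bimodule property is what makes these maps mutually inverse: the partial isometries $1\otimes e_{kl}$ in $M'$ move the $(a,k),(a',l)$ blocks of $V$ onto one another, so $V$ is determined by its compression. This is exactly Weaver's argument for quantum relations, tensored with $B(K)$. Conditions (1) and (3) involve only the second tensor leg, so they are preserved automatically.

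\textbf{Second variable (changing $K$).} Here I expect the main obstacle. Condition (1) only asks $V\subseteq B(H)\otimes N$, and the commutant of $N$ is not used. So $V$ is an arbitrary subspace in the second leg, subject only to invariance under $1\otimes Z(N)$. Given an isomorphism $\beta:N_1\to N_2$, I use $\mathrm{id}\otimes\beta: B(H)\otimes N_1\to B(H)\otimes N_2$. This is a linear bijection that is multiplicative on the second leg and carries $Z(N_1)$ onto $Z(N_2)$. It is also a bimodule map for $M'\otimes1$, because it is multiplicative and acts as the identity on the first leg. Hence it maps multi-relations to multi-relations, and $\mathrm{id}\otimes\beta^{-1}$ is its inverse. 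For the left multiplication in (3), I use $\beta(zn)=\beta(z)\beta(n)$.

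Finally, I compose: fix an isomorphism $\alpha:M_1\to M_2$ and use its standard forms to get bijections $V\leftrightarrow V_0$. I transport $V_0$ by $\alpha|_{Z}$ via a unitary between the multiplicity-free representations, which is just conjugation by a unitary $U\otimes 1$ and preserves all three conditions. Then I apply $\mathrm{id}\otimes\beta$. The only delicate check is that the first-variable correspondence respects conditions (1) and (3). It does, because all maps used act on the $H$-leg only and commute with $1\otimes B(K)$.
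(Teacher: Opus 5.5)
Your proof is correct for unital representations, and it uses the same Weaver-type mechanism as the paper. You compress $V$ by a projection in $M'\otimes 1$ of central support $1$, and recover $V$ as the $(M'\otimes 1)$-bimodule generated by the compression. The paper does this by assuming, ``without loss of generality'', that $H_2=\tilde H\otimes H_1$, $M_2=1_{\tilde H}\otimes M_1$, $K_2=\tilde K\otimes K_1$, $N_2=1_{\tilde K}\otimes N_1$. It then uses $V\mapsto B(\tilde H)\otimes 1_{\tilde K}\otimes V$, with inverse given by compression by $e_{i_0i_0}$. Your version differs in two worthwhile ways.

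\begin{enumerate}
\item Compressing to the multiplicity-free representation via $P$ covers representations whose multiplicities vary from summand to summand. For example, take $\mathbb{C}\oplus\mathbb{C}$ on $\mathbb{C}\oplus\mathbb{C}^2$ versus on $\mathbb{C}^2\oplus\mathbb{C}$; neither is a uniform amplification of the other, and the paper's reduction passes over this case.
\item You notice that $N'$ never enters Definition \ref{defn_multirelation}. Conditions (1) and (3) only involve $B(H)\otimes N$ and $Z(N)$, which are intrinsic to $N$ as an algebra. So $\mathrm{id}\otimes\beta$ handles the second variable for an arbitrary isomorphism $\beta$, with no spatial reduction, and the paper's $1_{\tilde K}\otimes(\cdot)$ is a special case.
\end{enumerate}

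The one check you leave implicit is that compressing the generated bimodule returns exactly $V_0$. This follows from $P^*\bigl(\bigoplus_a 1\otimes x_a\bigr)P=\bigoplus_a (x_a)_{11}1_{\mathbb{C}^{n_a}}\in M_0'$.

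You should delete the sentence on non-unital representations: it is false, and the theorem itself fails there. If $M$ has unit $p\neq 1$, then $M'=pM'p\oplus B((1-p)H)$. The corner $(1-p)V(1-p)$ can then be any $B((1-p)H)\otimes W$ with $W\subseteq N$ a $Z(N)$-invariant subspace, chosen independently of $pVp$, so it is not determined by the other blocks.

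Concretely, $M_1=\mathbb{C}e_{11}\subseteq B(\mathbb{C}^2)$ and $M_2=\mathbb{C}=B(\mathbb{C})$ are isomorphic. Take $N_1=N_2=\mathbb{C}$. Then $M_1'$ is the diagonal algebra, which gives $16$ multi-relations on $(M_1,N_1)$ against $2$ on $(M_2,N_2)$.

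So the statement must be read with $1\in M_i$. This is Weaver's von Neumann algebra setting, and the paper's appeal to the structure theory of representations tacitly assumes it. On the $N$ side unitality is irrelevant, since your $\mathrm{id}\otimes\beta$ argument never uses it.
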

\begin{proof}
The proof is similar to the proof of theorem 2.7 in \cite{MR2908249}. We know that any finite dimensional algebra $M$ is  direct sum of full matrix algebras and any representation of $M$ on a finite dimensional Hilbert space $H$ is unitarily equivalent to direct sum of amplifications of its matrix summands in $B(H)$ (consequence of Theorem IV, 5.5, \cite{MR548728} for finite dimensional algebras).      \par 
Therefore, without loss of generality, we can assume that
\begin{align*}
H_2=\tilde{H}\otimes H_1,\quad K_2=\tilde{K}\otimes K_1,\quad M_2=id_{\tilde{H}}\otimes M_1 \quad\text{and}\quad N_2=id_{\tilde{K}}\otimes N_1.
\end{align*}
We also have the following equalities:
\begin{align}\label{commutator_identities_indep_proof}
M'_2=B(\tilde{H})\otimes M'_1\quad Z(N_2)=id_{\tilde{K}}\otimes Z(N_1)
\end{align}
Let us write $B(\tilde{H})=span\{e_{ij}\:|\:i,j\}$ where $e_i$'s form an orthonormal basis of $\tilde{H}$ and $e_{ij}$'s are elementary matrix units.
Let $\mathcal{F}_i$ be the families of quantum multi-relations over the pairs of algebras $(M_i,N_i)$. We define $\Phi:\mathcal{F}_1\rightarrow \mathcal{F}_2$ and $\Psi:\mathcal{F}_2\rightarrow \mathcal{F}_1$ by
\begin{align*}
\Phi(V)=B(\tilde{H})\otimes id_{\tilde{K}}\otimes V\quad\text{and}\quad \Psi(\tilde{V})=\{A\in B(H_1\otimes K_1)\:|\: e_{i_0i_0}\otimes id_{\tilde{K}}\otimes A\in \tilde{V}\}
\end{align*}

where $e_{i_0i_0}$ is a fixed rank one projection in $B(\tilde{H})$. We need to check the following:
\begin{enumerate}
\item $\Phi$ and $\Psi$ are well defined. 
\item $\Phi$ and $\Psi$ are inverses of each other.
\end{enumerate}
Using identities \ref{commutator_identities_indep_proof} it is easy to see that these maps are well defined and also $\Psi \circ \Phi=id_{\mathcal{F}_1}$. To show that $\Phi\circ\Psi=id_{\mathcal{F}_2}$, it is enough to show that
\begin{align*}
\tilde{V}=B(\tilde{H})\otimes id_{\tilde{K}}\otimes \Psi(\tilde{V}).
\end{align*}
Let us define $\tilde{W}$ to be the right hand side of the above equation.
For any $e_{ij}\in B(\tilde{H})$, we observe that, $$(e_{ij}\otimes id_{\tilde{K}}\otimes A)=(e_{ii_0}\otimes id_{\tilde{K}}\otimes 1)(e_{i_0i_0}\otimes id_{\tilde{K}}\otimes A)(e_{i_0j}\otimes id_{\tilde{K}}\otimes 1)\in \tilde{V}.$$
Therefore $\tilde{W}\subseteq \tilde{V}$. To show $\tilde{V}\subseteq \tilde{W}$, we observe that, for any $\tilde{A}\in \tilde{V}$, it is of the form: $\tilde{A}=\sum_{i,j}e_{ij}\otimes id_{\tilde{K}}\otimes \tilde{A}_{ij}$. Therefore,  to prove the claim, it is enough to show that $(e_{ij}\otimes id_{\tilde{K}}\otimes \tilde{A}_{ij})\in \tilde{W}$ for all $i,j$. We observe the following identities:
\begin{align*}
 e_{ij}\otimes id_{\tilde{K}}\otimes A_{ij}&=( e_{ii}\otimes id_{\tilde{K}}\otimes id_{{H_1\otimes K_1}})( \sum_{i',j'}e_{i'j'}\otimes id_{\tilde{K}}\otimes A_{i'j'})(e_{jj}\otimes id_{\tilde{K}}\otimes id_{{H_1\otimes K_1}})\in \tilde{V};\\
 e_{i_0i_0}\otimes id_{\tilde{K}}\otimes A_{ij}&=( e_{i_0i}\otimes id_{\tilde{K}}\otimes id_{H_1\otimes k_1})(e_{ij}\otimes id_{\tilde{K}}\otimes A_{ij})(e_{ji_0}\otimes id_{\tilde{K}}\otimes id_{H_1\otimes K_1})\in \tilde{V}
\end{align*}
Therefore $A_{ij}\in \Psi (\tilde{V})$ and hence $e_{ij}\otimes id_{\tilde{K}}\otimes {A_{ij}}\in \tilde{W}$ completing the proof.
\end{proof}
\begin{rem}
    It is not hard to see that definition \ref{defn_multirelation} and theorem \ref{embedding_indep} can be described for general von Neumann algebras considering weak* closed subspaces and spatial tensor products. As we are more interested in interplay of ``multigraphs" and its underlying simple or single-edged graphs which is obtained by applying trace on the quantum label set $N$, we have limited our treatments to finite dimensional structures in this article.  
\end{rem}
\subsection*{Weaver action and multi-edge indicator:} 
Though theorem \ref{embedding_indep} suggests that the multi-relations on a pair of algebras $(M,N)$ are in bijective correspondence if $M$ and $N$ are represented on different Hilbert spaces, we will subject ourselves to use a ``minimal" representation for $N$ for the rest of this section. For $N\subseteq B(K)$, we will say that $N$ is ``minimally represented" on $K$ if $Z(N)=N'\subseteq B(K)$. As $N$ is isomorphic to direct sum of matrix algebras, such representation always exists and any two minimal representations of $N$ are unitarily equivalent.\par 
Before proceeding further,  we briefly recall what Weaver action is (see proposition 2.23 of \cite{MR2908249} or section 5.1 in  \cite{MR4706978} ) in the context of a quantum relations. For $M\subseteq B(H)$, we define the Weaver action of $M\otimes M^{op}$ on $B(H)$ by,
\begin{align*}
    \pi(T_1\otimes T_2)(S)=T_1S T_2\quad\text{where}\quad T_1\otimes T_2\in M\otimes M^{op},\:\:S\in B(H).
\end{align*}

If $V\subseteq B(H)$ is an $M'-M'$ bimodule, then we define the annihilator $\mathcal{I}_V$ of the subspace $V$  by,
\begin{align*}
\mathcal{I}_V= \{\mathcal{B}\in M\otimes M^{op}\:|\: \pi(\mathcal{B})(T)=0\quad\text{for all}\quad T\in V\}
\end{align*}
Equipping $B(H)$ with a Hilbert Schmidt inner product coming from trace we see that $B(H)\cong H\otimes \overline{H}$ as Hilbert spaces. Under this isomorphism,  any $M'-M'$ bimodule in $B(H)$ is $(M\otimes M^{op})'$ left module in $H\otimes\overline{H}$. The ideal $\mathcal{I}_V$ is also the annihilator of $V$ under the left action of $M\otimes M^{op}$ on $H\otimes \overline{H}$. Now we will use the following lemma:
\begin{lem}\label{left_action_annhi_proj}
Let $M\subseteq B(H)$  be a von Neumann algebra and $H$ is not necessarily finite dimensional. For a subspace $V$ in $H$ which is also a left $M'$ module, the annihilator of $V$ under the action of $M$ is given by
\begin{align*}
    \mathcal{I}_V=M\tilde{\mathcal{P}}_V
\end{align*}
where $\tilde{\mathcal{P}}_V$ is orthogonal projection onto $V^{\perp}$.
\end{lem}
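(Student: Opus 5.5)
The plan is to prove the two inclusions $M\tilde{\mathcal{P}}_V\subseteq\mathcal{I}_V$ and $\mathcal{I}_V\subseteq M\tilde{\mathcal{P}}_V$. The first step is to observe that, because $V$ is invariant under $M'$, so is its closure $\overline{V}$. Hence the projection $P$ onto $\overline{V}$ lies in $M''=M$ by the bicommutant theorem. It follows that $\tilde{\mathcal{P}}_V=1-P\in M$, and that $\tilde{\mathcal{P}}_V$ projects onto $V^{\perp}=\overline{V}^{\perp}$. In particular $M\tilde{\mathcal{P}}_V\subseteq M$ is a left ideal, the one generated by $\tilde{\mathcal{P}}_V$.

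For the inclusion $M\tilde{\mathcal{P}}_V\subseteq\mathcal{I}_V$, take $x\in M$ and $v\in V$. Then $x\tilde{\mathcal{P}}_V v=x\cdot 0=0$, since $\tilde{\mathcal{P}}_V$ kills $V$.

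For the reverse inclusion, take $x\in M$ with $xv=0$ for all $v\in V$. By continuity $x$ also vanishes on $\overline{V}$, so $xP=0$. Therefore
\begin{align*}
x=x(P+\tilde{\mathcal{P}}_V)=x\tilde{\mathcal{P}}_V\in M\tilde{\mathcal{P}}_V.
\end{align*}

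The only point needing care is the infinite-dimensional case. There $V$ need not be closed, so the argument has to work with $\overline{V}$ and must interpret $V^{\perp}$ as $\overline{V}^{\perp}$, which it is. One also needs $M$ to be a von Neumann algebra so that $M''=M$; this is the main step, and it is given by hypothesis. The rest is formal.
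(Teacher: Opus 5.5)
Your proof is correct and follows essentially the same route as the paper: show that the projection onto $V^{\perp}$ commutes with $M'$ and hence lies in $M''=M$, then get $M\tilde{\mathcal{P}}_V\subseteq\mathcal{I}_V$ from $\tilde{\mathcal{P}}_V V=0$ and the reverse inclusion from $x=x\tilde{\mathcal{P}}_V$. The paper checks the commutation explicitly for self-adjoint $T\in M'$, a step you leave implicit (it is standard, since $M'$ is self-adjoint and so $\overline{V}$ reduces each $T\in M'$). In turn, you are more careful than the paper about replacing $V$ by $\overline{V}$ in the infinite-dimensional case.
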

\begin{proof}
For any self adjoint $T\in M'$,
\begin{align*}
  T\tilde{\mathcal{P}}_V= \tilde{\mathcal{P}}_V T\tilde{\mathcal{P}}_V= (\tilde{\mathcal{P}}_V T\tilde{\mathcal{P}}_V)^*=\tilde{\mathcal{P}}_V  T
\end{align*}
as $T(V)\subseteq V$ and $T(V^{\perp})\subseteq V^{\perp}$. Therefore $T$ commutes with $\tilde{\mathcal{P}}_V $ for all $T\in M'$. Hence, $\tilde{\mathcal{P}}_V \in M''=M$ as $M$ is a von Neumann algebra. As $Ker(\tilde{\mathcal{P}}_V)=V$, it follows that $\tilde{\mathcal{P}}_V\in \mathcal{I}_V$ and therefore $M\tilde{\mathcal{P}}_V\subseteq \mathcal{I}_V$. Conversely, for $T\in \mathcal{I}_V$ and $\xi\in H$, 
\begin{align*}
T\xi= T(1-\tilde{P}_V)\xi+T\tilde{P}_V\xi=T\tilde{\mathcal{P}}_V\xi
\end{align*}
Therefore $T\in M\tilde{\mathcal{P}}_V$.
\end{proof}
The discussion prior to lemma \ref{left_action_annhi_proj} along with lemma \ref{left_action_annhi_proj} suggest that, for any $M'-M'$ bimodule $V$ in $B(H)$, the generator of $\mathcal{I}_V$ under the Weaver action is given by $\tilde{\mathcal{P}}_V$ where $\tilde{\mathcal{P}}_V$ is orthogonal projection onto $V^{\perp}\subseteq  B(H)$.\par 
Now we move to the context of quantum multi-relations on a pair of finite dimensional algebras $(M,N)$. As we are working with a minimal representation for $N$, we can argue that a quantum multi-relation on $(M,N)$ is a subspace of $B(H)\otimes N\subseteq B(H\otimes K)$ which is a $(M\otimes N)' - (M\otimes N)'$ bimodule. Therefore using Weaver action of $M\otimes N$ on $B(H\otimes K)$, we get a projection $\tilde{\mathcal{P}}_V$ in $M\otimes M^{op}\otimes N\otimes N^{op}$ which annihilates $V$ in $B(H\otimes K)$.

\begin{defn}
    For a quantum multi-relation $V\subseteq B(H)\otimes N$ on a pair of finite dimensional algebras $(M,N)$ the \textbf{quantum multi-edge indicator} is defined by
    \begin{align*}
    \mathcal{P}_V=1-\tilde{\mathcal{P}_V}
    \end{align*}
    where $\tilde{\mathcal{P}_V}$ is the generator of the annihilator ideal $\mathcal{I}_V$ in $M\otimes M^{op}\otimes N\otimes N^{op}$ under the Weaver action on $B(H)\otimes B(K)$.
\end{defn}

\begin{rem}
Any projection in $M\otimes M^{op}\otimes N\otimes N^{op}$ does not produce a quantum multi-edge indicator unlike the case of quantum relations. Projections with range lying within $B(H)\otimes N$ only serve as  quantum multi-edge indicators of multi-relations on $(M,N)$. 
\end{rem}
\begin{defn}
Given a quantum multigraph $V\subseteq B(H)\otimes N$, the \textbf{underlying  quantum single edged graph} is define by 
\begin{align*}
    \tilde{V}=(id\otimes tr_K)(V). 
\end{align*}

\end{defn}
For a classical multigraph $(V,E,s,t)$, the underlying single edged graph is achieved by removing all edges between two fixed points and replacing them with a single edge between them.\par 
\begin{prop}\label{P_v-S_v_relation}
    Let $V\subseteq B(H)\otimes N$ be a quantum multigraph on a pair of algebras $(M,N)$ and $\tilde{V}$ be the underlying quantum single edged graph on $M$. Let us define $\mathcal{S}_V\in M\otimes M^{op}$ by 
    \begin{align*}
        \mathcal{S}_V=(id\otimes id\otimes tr_K)(id\otimes id\otimes m)(\mathcal{P}_V)
    \end{align*}
    where $m:N\otimes N^{op}\rightarrow N$ is defined by $m(a\otimes b)=ab$, that is the multiplication in $N$ and $\mathcal{P}_{V}$ is the quantum multi-edge indicator for $V$. It follows that $\mathcal{S}_V$ is a positive element in $M\otimes M^{op}$ with range $\tilde{V}$ under the Weaver action of $M\otimes M^{op}$ on $B(H)$.
\end{prop}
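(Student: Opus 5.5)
The plan is to turn the statement into an identity of operators on $B(H)$, equipped with the Hilbert--Schmidt inner product. In that picture $\mathcal{S}_V$ becomes a compression of the projection $\mathcal{P}_V$, and both positivity and the range can be read off directly.

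\textbf{Step 1 (the Weaver action is a $*$-representation).} Write $\pi_H$ and $\pi_{H\otimes K}$ for the Weaver actions of $M\otimes M^{op}$ on $B(H)$ and of $M\otimes M^{op}\otimes N\otimes N^{op}$ on $B(H\otimes K)=B(H)\otimes B(K)$. From $tr(S^*aTb)=tr((a^*Sb^*)^*T)$ we get $\pi(x)^*=\pi(x^*)$ for the Hilbert--Schmidt inner product. Hence $\pi_H$ is a $*$-homomorphism. It is also faithful, since the Weaver action is the left action of $M\otimes M^{op}$ on $H\otimes\overline{H}\cong B(H)$ discussed before lemma \ref{left_action_annhi_proj}. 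So $\mathcal{S}_V\geq 0$ in $M\otimes M^{op}$ as soon as $\pi_H(\mathcal{S}_V)$ is a positive operator on the Hilbert space $B(H)$. In the same picture, $\pi_{H\otimes K}(\mathcal{P}_V)$ is the orthogonal projection onto $V$, by lemma \ref{left_action_annhi_proj} and the definition $\mathcal{P}_V=1-\tilde{\mathcal{P}}_V$.

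\textbf{Step 2 (the key intertwining identity).} Define $J:B(H)\to B(H\otimes K)$ by $J(T)=T\otimes 1_K$. Since $\langle T\otimes 1,A\otimes B\rangle=tr(T^*A)\,tr(B)$, the adjoint is $J^*=id\otimes tr_K$. I claim
\begin{align*}
\pi_H(\mathcal{S}_V)=J^*\,\pi_{H\otimes K}(\mathcal{P}_V)\,J .
\end{align*}
Both sides are linear in $\mathcal{P}_V$, so it suffices to check the identity on elementary tensors $X\otimes a\otimes b$ with $X\in M\otimes M^{op}$ and $a\otimes b\in N\otimes N^{op}$. For these,
\begin{align*}
(id\otimes tr_K)\big(\pi(X\otimes a\otimes b)(T\otimes 1)\big)=(id\otimes tr_K)\big(\pi_H(X)(T)\otimes ab\big)=tr(ab)\,\pi_H(X)(T),
\end{align*}
and this equals $\pi_H\big((id\otimes id\otimes tr_K)(id\otimes id\otimes m)(X\otimes a\otimes b)\big)(T)$. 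This bookkeeping step is the one I expect to need the most care. One must make sure that the action of $N\otimes N^{op}$ on $1_K$ really yields the product $ab$ in $N$, in the order fixed by $m$, and that the identification of the operator $\mathcal{P}_V$ with the projection is compatible with these conventions.

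\textbf{Step 3 (positivity and range).} Put $P=\pi_{H\otimes K}(\mathcal{P}_V)$, the orthogonal projection onto $V$. Then
\begin{align*}
\pi_H(\mathcal{S}_V)=J^*PJ=(PJ)^*(PJ)\geq 0,
\end{align*}
so $\mathcal{S}_V\geq 0$ by Step 1. For the range, use the finite-dimensional identity $Range(A^*A)=Range(A^*)$ with $A=PJ$. Since $A^*=J^*P$ and $Range(P)=V$, this gives
\begin{align*}
Range\big(\pi_H(\mathcal{S}_V)\big)=Range(J^*P)=J^*(V)=(id\otimes tr_K)(V)=\tilde{V}.
\end{align*}
This is exactly the claim that $\mathcal{S}_V$ has range $\tilde{V}$ under the Weaver action.
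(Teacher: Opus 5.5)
Your proposal is correct and follows essentially the paper's route: the paper also sets $i(T)=T\otimes 1$, identifies $i^*=id\otimes tr_K$, checks $i^*\pi(\mathcal{P}_V)i=\pi(\mathcal{S}_V)$ on elementary tensors, and then obtains the range from the identity $Range(T)=Range(TT^*)$. Your factorization $J^*PJ=(PJ)^*(PJ)$ is a slightly cleaner packaging than the paper's detour through the projection $\mathcal{P}_W$ onto $B(H)\otimes 1$. Your Step 1, showing that the Weaver action is a faithful $*$-representation, also makes explicit the positivity of $\mathcal{S}_V$ in $M\otimes M^{op}$, which the paper leaves implicit.
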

\begin{proof}
Let us define $i:B(H)\rightarrow B(H)\otimes N$ by $i(T)=T\otimes 1$. Equipping $B(H)\otimes N$ with Hilbert Schmidt inner product coming from $tr_H\otimes tr_K$, we observe that, 
\begin{align*}
   \langle i^*(T\otimes S),T'\rangle_{tr_H}= \langle T\otimes S,T'\otimes 1\rangle_{tr_H\otimes tr_K}=\langle T, T'\rangle_{tr_H} tr_K(S).
\end{align*}
Therefore $i^*(\mathcal{T})=(id\otimes tr_K)(\mathcal{T})$ for all $\mathcal{T}\in B(H)\otimes N$. As $\pi(\mathcal{P}_V)$ projects $B(H)\otimes N$ onto $V$, it is easy to see that the diagram in figure \ref{weighted_edge_indi_diag} commutes.
\begin{figure}[h]
\begin{tikzcd}[row sep=2.5em, column sep=3.5em]
B(H)\otimes N 
    \arrow[r, "\pi(\mathcal{P}_V)"]
&
V 
    \arrow[d, "\: i^{*}=(\mathrm{id}\otimes \mathrm{tr}_K) \:"]
\\
B(H) 
    \arrow[u, "i\:"]
    \arrow[r, "i^{*} \pi(\mathcal{P}_V) i"']
&
\widetilde{V}
\end{tikzcd}
\caption{}\label{weighted_edge_indi_diag}
\end{figure}
Let us assume that $\mathcal{P}_V=\sum P_1\otimes P_2\otimes  P_3\otimes P_4$. Then for any $T\in B(H)$,
\begin{align*}
    i^* \pi(\mathcal{P}_V)\:i(T)=i^*(\sum P_1 T P_2 \otimes P_3 P_4)& =(\sum P_1 T P_2)tr_K(P_3P_4)\\
    &=\pi\big((id\otimes id\otimes tr_K)(id\otimes id\otimes m)(\mathcal{P}_V)\big)(T).
\end{align*}
Therefore $Range(\pi(\mathcal{S}_V))\subseteq \tilde{V}$ under . Let us define, 
\begin{align*}
    W=B(H)\otimes 1\subseteq B(H)\otimes N
\end{align*}
and $\mathcal{P}_W$ be the orthogonal projection from $B(H)\otimes N$ onto $W$. We observe that, 
\begin{align*}
Range(i^*\pi(\mathcal{P}_V)i)= Range(\mathcal{P}_W\: \pi(\mathcal{P}_V)\:\mathcal{P}_W) \quad\text{and}\quad \tilde{V}=Range(\mathcal{P}_W\:\pi(\mathcal{P}_V)).
\end{align*}
As for any operator $T$ on a finite dimensional Hilbert space, $Range (T)= Range (TT^*)$, it follows that, 
\begin{align*}
    Range(\pi(\mathcal{S}_V))= Range(\mathcal{P}_W\: \pi(\mathcal{P}_V)\:\mathcal{P}_W)=Range(\mathcal{P}_W\:\pi(\mathcal{P}_V))=\tilde{V}.
\end{align*}
\end{proof}
\begin{rem}\label{weighted_edge_indicator_rem}
The element $\mathcal{S}_V$ will be called the \textbf{edge indicator of the weighted underlying single edged graph} of a multigraph $V$. 
\end{rem}
\subsection*{Quantum multi-adjacency operators:}
We introduce different quantum adjacency operators related to a quantum multi-relation $V\subseteq B(H)\otimes N$ on a pair of algebras $(M,N)$. To describe these operators in a way consistent with the existing theory of the quantum ``single-edged" graphs, we will assume that $M$ and $N$ both are minimally represented on $H$ and $K$ respectively, that is, $Z(M)=M'\subseteq B(H)$ and $Z(N)=N'\subseteq B(K)$.   
\begin{defn}\label{multi_adjacency_def}
    For a quantum multi-relation $V\subseteq  B(H)\otimes N$, the \textbf{quantum multi-adjacency operator} $\mathcal{A}:M\otimes N\rightarrow M\otimes N$ is defined by,
    \begin{align*}
        \mathcal{A}_{\mathcal{P}_V}(m\otimes n)=(tr_{H}\otimes id\otimes tr_{K}\otimes id)(\mathcal{P}_V (m\otimes 1\otimes n\otimes 1))
    \end{align*}
The weighted adjacency operator $\mathcal{A}_{\mathcal{S}_V}:M\rightarrow M$ is given by, 
\begin{align*}
\mathcal{A}_{\mathcal{S}_V}(m)=(tr_H\otimes id)(\mathcal{S}_V(m\otimes 1))
\end{align*}
Adjacency operator of the underlying single edged graph is given by
\begin{align*}
    \mathcal{A}_{\mathcal{P}_{\tilde{V}}}(m)   =(tr_H\otimes id)(\mathcal{P}_{\tilde{V}}(m\otimes 1))
\end{align*}
where $\mathcal{P}_{\tilde{V}}$ is the edge indicator of the underlying single-edged graph $\tilde{V}$.
\end{defn}
We equip $M$ and $N$ with Hilbert Schmidt inner product with respect to $tr_H$ and $tr_K$ and call it $L^2(M)$ and $L^2(N)$ respectively. The algebras $M$ and $N$ can be seen as subalgebras of $B(L^2(M))$ and $B(L^2(N))$ respectively via GNS representations. Given an element $x$ in $M$ or $N$, $\Lambda(x)$ will denote the corresponding element in $L^2(M)$ or $L^2(N)$.
\begin{prop}
 Let $V\subseteq B(H)\otimes N$ be a quantum multi-relation and $\mathcal{A}_{\mathcal{P}_V}$ and $\mathcal{A}_{\mathcal{S}_V}$ be the corresponding adjacency operators defined in definition \ref{multi_adjacency_def}. The following equality holds:
 \begin{align*}
 (id\otimes tr_{L^2(N)})(\mathcal{A}_{\mathcal{P}_V})=\mathcal{A}_{\mathcal{S}_V}.
 \end{align*}
\end{prop}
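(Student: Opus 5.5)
The proof is a direct computation. Both sides are linear in $\mathcal{P}_V$, so I would reduce to elementary tensors and compare the two sides term by term. Since $\mathcal{P}_V\in M\otimes M^{op}\otimes N\otimes N^{op}$, I write $\mathcal{P}_V=\sum P_1\otimes P_2\otimes P_3\otimes P_4$ with $P_1,P_2\in M$ and $P_3,P_4\in N$. This is the same expansion used in the proof of proposition \ref{P_v-S_v_relation}, with the same ordering of tensor slots as in the definitions of $\mathcal{S}_V$ and $\mathcal{A}_{\mathcal{P}_V}$. Under the identification $B(L^2(M\otimes N))\cong B(L^2(M))\otimes B(L^2(N))$, the map $(id\otimes tr_{L^2(N)})$ is linear in $\mathcal{A}_{\mathcal{P}_V}$. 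The assignment $\mathcal{P}_V\mapsto \mathcal{A}_{\mathcal{P}_V}$ is also linear. It therefore suffices to verify the identity for a single elementary tensor $P_1\otimes P_2\otimes P_3\otimes P_4$.

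First, I compute the multi-adjacency operator on an elementary tensor. From definition \ref{multi_adjacency_def}, the product in $M\otimes M^{op}\otimes N\otimes N^{op}$ gives
\begin{align*}
\mathcal{A}(m\otimes n)=(tr_H\otimes id\otimes tr_K\otimes id)\big(P_1m\otimes P_2\otimes P_3n\otimes P_4\big)=tr_H(P_1m)P_2\otimes tr_K(P_3n)P_4 .
\end{align*}
So $\mathcal{A}=L_1\otimes L_2$, where $L_1(m)=tr_H(P_1m)P_2$ on $L^2(M)$ and $L_2(n)=tr_K(P_3n)P_4$ on $L^2(N)$.

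Second, I compute the partial trace. Using the Hilbert--Schmidt inner product $\langle a,b\rangle=tr_K(a^*b)$ on $L^2(N)$, we have $tr_K(P_3n)=\langle\Lambda(P_3^*),\Lambda(n)\rangle$. Hence $L_2$ is the rank one operator $\theta_{\Lambda(P_4),\Lambda(P_3^*)}$ in the sense of notation \ref{rank-1}. Its trace is
\begin{align*}
tr_{L^2(N)}(L_2)=\langle \Lambda(P_3^*),\Lambda(P_4)\rangle=tr_K(P_3P_4).
\end{align*}
Consequently $(id\otimes tr_{L^2(N)})(\mathcal{A})=tr_K(P_3P_4)\,L_1$.

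Third, I compute the weighted adjacency operator. By the definition in proposition \ref{P_v-S_v_relation}, $\mathcal{S}_V=\sum P_1\otimes P_2\,tr_K(P_3P_4)$. Then definition \ref{multi_adjacency_def} gives
\begin{align*}
\mathcal{A}_{\mathcal{S}_V}(m)=tr_K(P_3P_4)\,tr_H(P_1m)P_2=tr_K(P_3P_4)\,L_1(m).
\end{align*}
This matches the result of the second step, and summing over the elementary tensors finishes the proof.

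The main obstacle is bookkeeping rather than analysis. One must make sure the tensor slots are matched consistently: the first and third slots are traced in $\mathcal{A}_{\mathcal{P}_V}$, while $m$ multiplies the third and fourth slots in $\mathcal{S}_V$. One must also make sure the canonical operator trace on $B(L^2(N))$ is the one used, so that the trace of a rank one operator equals the $tr_K$-inner product. I would state the identification of $L_2$ as a rank one operator explicitly, since that is exactly where $tr_K(P_3P_4)$, the factor produced by the multiplication map $m$, appears.
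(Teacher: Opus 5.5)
Your proposal is correct and follows essentially the same route as the paper. Both expand $\mathcal{P}_V=\sum P_1\otimes P_2\otimes P_3\otimes P_4$, recognize the $N$-factor of $\mathcal{A}_{\mathcal{P}_V}$ as the rank-one operator $\theta_{\Lambda(P_4),\Lambda(P_3^*)}$ on $L^2(N)$, and use $tr_{L^2(N)}(\theta_{\Lambda(P_4),\Lambda(P_3^*)})=\langle \Lambda(P_3^*),\Lambda(P_4)\rangle=tr_K(P_3P_4)$ to match $\mathcal{A}_{\mathcal{S}_V}$. Your formula $\mathcal{A}_{\mathcal{S}_V}(m)=\sum tr_K(P_3P_4)\,tr_H(P_1m)P_2$ is the right one; the paper's displayed version drops the $m$ inside $tr_H$, which is a typo.
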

\begin{proof}
Assuming $\mathcal{P}_V=\sum P_1\otimes P_2\otimes P_3\otimes P_4$, we observe that for $m\in M$, $n\in N$
\begin{align*}
    \mathcal{A}_{\mathcal{P}_V}(m\otimes n)&=\sum tr_H(P_1m)P_2\otimes  tr_K(P_3n)P_4,\\
    \mathcal{A}_{\mathcal{S}_V}(m)&=\sum tr_H(P_1)P_2 tr_K(P_3P_4).
\end{align*}
As $A_{\mathcal{P}_V}\in B(L^2(M))\otimes B(L^2(N))$, and $A_{\mathcal{S}_V}\in B(L^2(M))$, it follows that,
\begin{align}
\mathcal{A}_{\mathcal{P}_V}&=\sum \theta_{\Lambda(P_2),\Lambda({P_1}^*)}\otimes \theta_{\Lambda(P_4),\Lambda({P_3}^*)},\notag\\
\mathcal{A}_{\mathcal{S}_V}&=\sum \theta_{\Lambda(P_2),\Lambda({P_1}^*)} \langle \Lambda({P_3}^*), \Lambda(P_4) \rangle_{L^2(N)}\label{adjacency_operators_rep_lambda}
\end{align}
where $\theta_{\Lambda(P_2),\Lambda({P_1}^*)}$ and $\theta_{\Lambda(P_4),\Lambda({P_3}^*)}$'s are rank one operators in $B(L^2(M))$ and $B(L^2(N))$ respectively. As $tr_{L^2(N)}(\theta_{\Lambda(P_4),\Lambda({P_3}^*)})= \langle \Lambda({P_3}^*), \Lambda(P_4) \rangle_{L^2(N)}$, from equations \ref{adjacency_operators_rep_lambda}, the proposition follows.
\end{proof}
Considering the  algebra multiplication $m$ in $M$ as a linear map $L^2(M)\otimes L^2(M)\rightarrow L^2(M)$ we may consider its adjoint $m^*$. We will use the same notation $m^*$ for different algebras, the underlying algebra will always be clear from the context.
The following theorem suggests that usual properties of adjacency operators hold. 
\begin{thm}\label{multi_adjacency_classification}
    For a multi-relation $V\subseteq B(H)\otimes N$ on a pair of algebras $(M,N)$ the quantum adjacency operators $\mathcal{A}_{\mathcal{P}_V}, \mathcal{A}_{\mathcal{S}_V}$ and $\mathcal{A}_{\mathcal{P}_{\tilde{V}}}$ are completely positive maps. Moreover,  $\mathcal{A}_{\mathcal{P}_V}$ and $\mathcal{A}_{\mathcal{P}_{\tilde{V}}}$ satisfy Schur idempotent condition, that is, 
    \begin{align*}
    m(\mathcal{A}_{\mathcal{P}_V}\otimes \mathcal{A}_{\mathcal{P}_V}) m^*=\mathcal{A}_{\mathcal{P}_V}\quad \text{and}\quad  m(\mathcal{A}_{\mathcal{P}_{\tilde{V}}}\otimes \mathcal{A}_{\mathcal{P}_{\tilde{V}}}) m^*=\mathcal{A}_{\mathcal{P}_{\tilde{V}}}
    \end{align*}
    where $\mathcal{A}_{\mathcal{P}_V}$ and $\mathcal{A}_{\mathcal{P}_{\tilde{V}}}$ are considered as elements of $B(L^2(M)\otimes L^2(N))$ and $B(L^2(M))$ respectively.
\end{thm}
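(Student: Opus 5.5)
The plan is to reduce all three statements to known facts about quantum adjacency matrices of Weaver quantum relations, such as those in \cite{MR4706978}. The reduction uses the structure of $\mathcal{P}_V$ and $\mathcal{S}_V$ as elements of $M\otimes M^{op}\otimes N\otimes N^{op}$ and $M\otimes M^{op}$, both represented on $H\otimes \overline{H}\otimes K\otimes\overline{K}$ (resp. $H\otimes\overline H$) via the Weaver action. Since $M$ and $N$ are minimally represented, $M\otimes N\subseteq B(H\otimes K)$ is also minimally represented. Moreover, $V$ is an $(M\otimes N)'-(M\otimes N)'$ bimodule, because $(M\otimes N)'=Z(M)\otimes Z(N)$ and $V$ is stable under $M'\otimes 1$ on both sides and under $1\otimes Z(N)$. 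Hence $V$ is a Weaver quantum relation on $M\otimes N\cong (M\otimes N)$. Under the flip $M\otimes M^{op}\otimes N\otimes N^{op}\cong (M\otimes N)\otimes (M\otimes N)^{op}$, the projection $\mathcal{P}_V$ is exactly the edge indicator (the projection onto $V$ under the Weaver action) of this quantum relation. Likewise $\mathcal{A}_{\mathcal{P}_V}$ is the usual quantum adjacency operator $(tr\otimes id)(\mathcal{P}(x\otimes 1))$ for $x\in M\otimes N$. The same identification applies directly to $\tilde V$, which is an $M'-M'$ bimodule (apply $id\otimes tr_K$ to the bimodule property), with edge indicator $\mathcal{P}_{\tilde V}$.

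Given this identification, Schur idempotency of $\mathcal{A}_{\mathcal{P}_V}$ and $\mathcal{A}_{\mathcal{P}_{\tilde V}}$ is the standard correspondence: a projection $P\in A\otimes A^{op}$ is mapped to a Schur idempotent. Rather than only cite this, I will verify it directly. Write $P=\sum P_1\otimes P_2$ and use the trace inner product on $L^2(A)$, so that $m^*$ is computed from matrix units of $A$: with $A=\bigoplus B(H_s)$ and the non-normalized trace, $m^*(e_{ij})=\sum_k e_{ik}\otimes e_{kj}$. Then $m(\mathcal{A}_P\otimes\mathcal{A}_P)m^*=\mathcal{A}_{P^2}$, where $P^2$ is the product in $A\otimes A^{op}$. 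This is a routine computation on matrix units using $tr(P_1e_{ik})tr(Q_1e_{kj})$ summed over $k$. It gives idempotency since $P^2=P$, and it requires $P$ to commute appropriately, which holds because the product taken is the algebra product. One point to check here is that the opposite-algebra multiplication in the second leg arises exactly from $m$ in $A$ applied to the outputs $P_2Q_2$. If the order comes out reversed, I will use the Weaver product convention $\pi(ab)=\pi(a)\pi(b)$ to fix it.

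For complete positivity, I will use that $\mathcal{A}_{P}$ is CP whenever $P\ge 0$ in $A\otimes A^{op}$. Write $P=\sum_\alpha X_\alpha^*X_\alpha$ with $X_\alpha=\sum a\otimes b$. Then $\mathcal{A}_P(x)=\sum (tr\otimes id)(X^*_\alpha X_\alpha(x\otimes 1))$, and by the Choi-type correspondence of proposition \ref{CP_module_correspondence} (with $M,N$ replaced by $A$), $\mathcal{A}_P$ is CP iff its Choi-type invariant is positive. That invariant equals $P$ up to the flip and transpose isomorphisms $A^{op}\cong \pi_{op}(A)$, and these preserve positivity. This settles $\mathcal{A}_{\mathcal{P}_V}$ and $\mathcal{A}_{\mathcal{P}_{\tilde V}}$, since both are projections. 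For $\mathcal{A}_{\mathcal{S}_V}$, positivity of $\mathcal{S}_V$ is given by proposition \ref{P_v-S_v_relation}, and the same argument applies. Alternatively, $\mathcal{A}_{\mathcal{S}_V}=(id\otimes tr_{L^2(N)})(\mathcal{A}_{\mathcal{P}_V})$ by the preceding proposition, and a partial trace of a CP map is CP.

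The main obstacle is bookkeeping. One must check that the identification of the Choi-type invariant of $\mathcal{A}_P$ with $P$ passes correctly through the opposite algebra and GNS conventions, and also that the reordering $M\otimes M^{op}\otimes N\otimes N^{op}\to (M\otimes N)\otimes(M\otimes N)^{op}$ is a $*$-isomorphism compatible with the Weaver action. Both reduce to explicit checks on matrix units, which I will carry out in detail.
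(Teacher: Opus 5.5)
Your proposal is correct and follows essentially the paper's route. Like the paper, you identify $\mathcal{P}_V$, $\mathcal{S}_V$ and $\mathcal{P}_{\tilde V}$ as the Choi-type invariants of the three adjacency maps (this is where the minimal representations matter), deduce complete positivity from their positivity, and obtain Schur idempotency from $P^2=P$ via the matrix-unit identity $m(\mathcal{A}_P\otimes\mathcal{A}_P)m^*=\mathcal{A}_{P^2}$ with $m^*(e_{ij})=\sum_k e_{ik}\otimes e_{kj}$. Your treatment of $\mathcal{A}_{\mathcal{P}_V}$ as a Weaver quantum relation on $M\otimes N$ is exactly the paper's ``replace $M$ by $M\otimes N$'' step.
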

\begin{proof}

As $M$ and $N$ both are finite dimensional algebras and hence isomorphic to direct sum of matrix algebras, we write
\begin{align*}
    M \cong span \{e^a_{ij}\:|\:i,j,a\}, \quad\text{and}\quad N\cong span\{f^b_{kl}\:|\:b,k,l\}.
\end{align*}
Under above identifications, the Choi-type invariants corresponding to the maps $\mathcal{A}_{\mathcal{P}_V}$, $\mathcal{A}_{\mathcal{S_V}}$ and $\mathcal{A}_{\mathcal{P}_{\tilde{V}}}$ are $\mathcal{P}_V$, $\mathcal{S}_V$ and $\mathcal{P}_{\tilde{V}}$ respectively. Since these are all positive operators, it follows that their associated maps completely positive(remark \ref{Choi-type_invar}).  Moreover,  $\mathcal{P}_V$ and $\mathcal{P}_{\tilde{V}}$ are projections, which implies that the associated maps satisfy Schur idempotent conditions. We will provide a short proof for the case of  $\mathcal{A}_{\mathcal{P}_{\tilde{V}}}$, the argument for $\mathcal{A}_{\mathcal{P}_V}$ will follow simply by replacing $M$ with $M\otimes N$.\par  
As $M$ is minimally represented, it follows that, 
\begin{align*}
m^*(e^a_{ij})=\sum_{k} e^a_{ik}\otimes e^a_{kj} 
\end{align*}
As $\mathcal{P}_{\tilde{V}}$ is the Choi-type invariant of $\mathcal{A}_{\mathcal{P}_{\tilde{V}}}$, we observe that,
\begin{align*}
 \mathcal{P}_{\tilde{V}}=\sum_{i,j,a} e^a_{ij}\otimes \mathcal{A}_{\mathcal{P}_{\tilde{V}}}(e^a_{ji}) \quad\text{and}\quad
 (\mathcal{P}_{\tilde{V}})^2=\sum_{i,j,a} e^a_{ij}\otimes \big(\sum_{k} \mathcal{A}_{\mathcal{P}_{\tilde{V}}} (e^a_{ki})* \mathcal{A}_{\mathcal{P}_{\tilde{V}}}(e^a_{jk})\big)
\end{align*}
where $*$ is the multiplication in $M^{op}$. As $\mathcal{P}_{\tilde{V}}=(\mathcal{P}_{\tilde{V}})^2$, it further follows that,
 
\begin{align*}
\mathcal{A}_{\mathcal{P}_{\tilde{V}}}(e^a_{ji})=\sum_k \mathcal{A}_{\mathcal{P}_{\tilde{V}}}(e^a_{jk})\mathcal{A}_{\mathcal{P}_{\tilde{V}}} (e^a_{ki})=m\:(\mathcal{A}_{\mathcal{P}_{\tilde{V}}}\otimes \mathcal{A}_{\mathcal{P}_{\tilde{V}}})\: m^*(e^a_{ji}).
\end{align*}
As above identity holds for all $e^a_{ji}$'s, our claim follows.
\end{proof}
We conclude this section with descriptions of these adjacency operators for a classical labeled multigraph.

\textbf{The classical case:}
For a classical multi-relation or a labeled multigraph $R\subseteq X\times X\times Y$, the multi-edge indicator is given by, 
\begin{align*}
\mathcal{P}_R=\sum_{(x_1,x_2,y)\in R} e_{x_1 x_1}\otimes e_{x_2x_2}\otimes e_{yy}\otimes e_{yy}\in l^{\infty}(X)\otimes l^{\infty}(X)\otimes l^{\infty}(Y)\otimes l^{\infty}(Y).
\end{align*}
The multi-adjacency operator is an element of $B(L^{2}(X)\otimes L^{2}(Y))$ and the adjacency operators of the underlying weighted and non weighted single-edged graphs lie in $B(l^{\infty}(X)).$ Their matrix coefficients are given by:
\begin{align*}
    (\mathcal{A}_{\mathcal{P}_R})_{(x_1x_2)(y_1y_2)}&=\delta_{y_1y_2} 1 \quad \text{if}\quad (x_1,x_2,y)\in R\\ 
    &=0\quad\text{otherwise.}\\
    (\mathcal{A}_{\mathcal{S}_R})_{x_1x_2}&=\text{number of edges from $x_1$ to $x_2$}\\
    (\mathcal{A}_{\mathcal{P}_{\tilde{R}}})_{x_1x_2}&=1\quad\text{if there exists $y\in Y$ such that}\quad (x_1,x_2,y)\in R;\\
    &=0 \quad\text{otherwise}.
\end{align*}

\section{\textbf{Decomposable Multi-relations}}\label{decomp_mult}
In this section, we will study a special class of multi-relations namely, ``decomposable multi-relations". We will see that any decomposable multi-relation, which is ``symmetric",  is confusability multigraph of some completely positive map (Theorem \ref{fin_thm}).\par 
Instead of working with two arbitrary pair of algebras $(M,N)$ where $M\subseteq B(H)$ and $N\subseteq B(K)$, for now, We assume, that 
$N$ is a full matrix algebra rather than a direct sum of matrix algebras; that is, 
$N=B(K)$ for some Hilbert space $K$.

\begin{nota}\label{nota_swap}
    Let $H$ and $K$ be two finite dimensional Hilbert spaces. Equipping $B(H)$ and $B(K)$ with Hilbert Schmidt inner product, We have following Hilbert space isomorphisms:
    \begin{align*}
B(H\otimes K)\cong H\otimes \overline{H}\otimes K\otimes \overline{K}\cong H\otimes K\otimes \overline{K}\otimes \overline{H} \cong B(\overline{K},H)\otimes B(H,\overline{K})
    \end{align*}
    
    We denote this chain of isomorphisms simply by the flip $\sigma:B(\overline{K},H)\otimes B(H,\overline{K})\rightarrow B(H\otimes K)$.
\end{nota}
\begin{defn}\label{decomposable_multi}
    A quantum multi-relation $V$ on $(M,N)$ is said to be \textbf{decomposable} if there exists $V_1\subseteq B(\overline{K},H)$ and $V_2\subseteq B(H,\overline{K})$ such that, 
    \begin{align*}
        \sigma(V_1\otimes V_2)=V
    \end{align*}
    where the  $\sigma$ is defined in notation \ref{nota_swap}.
\end{defn}
\begin{rem}
     Definition \ref{decomposable_multi} does not behave well with respect to amplifications of the algebra $N$. For example, let us consider $M=l^\infty(X) $ and $N=l^{\infty}(Y)$ where $X=\{1,2\}$ and $Y=\{1\}$. The trivial multi-relation $V=B(l^2(X))\otimes l^{\infty}(Y)$ is decomposable if we consider $l^{\infty}(Y)\subseteq B(l^2(Y))\cong B(\mathbb{C})$. The moment we amplify $l^{\infty}(Y)$, by embedding $l^{\infty}(Y)$ in $B(\mathbb{C}^2)$ as $\{\lambda \:id_{\mathbb{C}^2}\:|\:\lambda\in\mathbb{C}\}$, the amplification of $V$ is not decomposable anymore. However, decomposability behaves well with respect to amplifications of $M$. The proof is similar to the proof of theorem \ref{embedding_indep}.
\end{rem}
For a decomposable multi-relation $V$, its components $V_1$ and $V_2$ are $M'$ left and right modules respectively. On the other hand, for any algebra $N\subseteq B(K)$, using remark \ref{opp_action}, we observe that  $N$ has a right action on $\overline{K}$ given by $\overline{\xi}*T=\pi_{op}(T)(\xi)=\overline{T^*\xi}$. Let us consider the the Weaver action of $M\otimes N$ on $B(\overline{K},H)$ by the following description:
\begin{align*}
    \pi_1(T_1\otimes T_2)(S)=T_1S\: \pi_{op}(T_2)\quad \text{where}\quad T_1\otimes T_2\in M\otimes N, S\in B(\overline{K},H)
\end{align*}
Let $\mathcal{P}_{V_1}=1-\tilde{\mathcal{P}_{V_1}}$ where $\tilde{\mathcal{P}_{V_1}}$ is the generator of the annihilator ideal of $V_1$ under the Weaver action. Using lemma \ref{left_action_annhi_proj}, it follows that $\mathcal{P}_{V_1}$ is an orthogonal projection from $B(\overline{K},H)$ onto $V_1$.  Similarly, by considering the action of $N^{op}\otimes M^{op}$ on $B(H,\overline{K})$ we get the orthogonal projection $\mathcal{P}_{V_2}$ from $B(H,\overline{K})$ onto the subspace $V_2$.
\begin{prop}\label{decom_mult_idn}
Let $V\subseteq B(H)\otimes B(K)$ be a decomposable multi-relation and with components $V_1\subseteq B(\overline{K},H)$ and $V_2\subseteq B(H,\overline{K})$. Then following conditions hold:
\begin{enumerate}
    \item $\sigma'(\mathcal{P}_{V_1}\otimes \mathcal{P}_{V_2})=\mathcal{P}_V$ where $\sigma'$ is an isomorphism between $M\otimes N\otimes N^{op}\otimes M^{op}$ and $M\otimes M^{op}\otimes N\otimes N^{op}$ obtained by swapping tensor components.
   
    \item If $M$ is minimally represented on $H$, then  $\mathcal{A}_{\mathcal{S}_{V}}=\mathcal{A}_{\mathcal{P}_{V_2}}\circ \mathcal{A}_{\mathcal{P}_{V_1}}$ where $\mathcal{S}_{V}$ is the weighted edge indicator (remark \ref{weighted_edge_indicator_rem}) of the multigraph $V$  .
\end{enumerate}
\end{prop}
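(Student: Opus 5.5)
For (1), I would work throughout in the Hilbert space picture. Under Notation \ref{nota_swap}, $B(H\otimes K)\cong H\otimes\overline{H}\otimes K\otimes\overline{K}$, and the flip $\sigma$ is a unitary between Hilbert spaces. Next I would record how the Weaver actions look in this picture. The action of $M\otimes M^{op}\otimes N\otimes N^{op}$ on $B(H\otimes K)$ acts factorwise on $H$, $\overline{H}$, $K$, $\overline{K}$. The action $\pi_1$ of $M\otimes N$ on $B(\overline{K},H)\cong H\otimes K$ acts on the $H$ and $\overline{K}\cong K$-type legs (the right action through $\pi_{op}$). The action of $N^{op}\otimes M^{op}$ on $B(H,\overline{K})$ handles the remaining two legs. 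So, conjugating by $\sigma$, the factorwise actions intertwine exactly via the tensor swap $\sigma'$; in other words,
\begin{align*}
\pi(\sigma'(X\otimes Y))=\sigma\,(\pi_1(X)\otimes\pi_2(Y))\,\sigma^{-1}.
\end{align*}
Lemma \ref{left_action_annhi_proj} identifies $\mathcal{P}_{V_1}$, $\mathcal{P}_{V_2}$ and $\mathcal{P}_V$ as the orthogonal projections onto $V_1$, $V_2$ and $V$. The tensor product of the orthogonal projections onto $V_1$ and $V_2$ is the orthogonal projection onto $V_1\otimes V_2$. Since $\sigma$ is unitary, conjugating by it gives the orthogonal projection onto $\sigma(V_1\otimes V_2)=V$. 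Faithfulness of the Weaver action (all algebras are finite dimensional and the representations are faithful on the relevant tensor legs) then gives $\sigma'(\mathcal{P}_{V_1}\otimes\mathcal{P}_{V_2})=\mathcal{P}_V$. The only care needed is in matching the conventions for $\pi_{op}$ and the conjugate spaces, so that the $N$ and $N^{op}$ legs land in the correct slots of $\sigma'$.

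For (2), I would write $\mathcal{P}_{V_1}=\sum A_1\otimes A_2$ with $A_1\in M$, $A_2\in N$, and $\mathcal{P}_{V_2}=\sum B_1\otimes B_2$ with $B_1\in N^{op}$, $B_2\in M^{op}$. By (1), $\mathcal{P}_V=\sum A_1\otimes B_2\otimes A_2\otimes B_1$. Then
\begin{align*}
\mathcal{S}_V=\sum tr_K(A_2B_1)\,A_1\otimes B_2,
\end{align*}
and so
\begin{align*}
\mathcal{A}_{\mathcal{S}_V}(m)=\sum tr_H(A_1m)\,tr_K(A_2B_1)\,B_2.
\end{align*}
The next step is to interpret $\mathcal{A}_{\mathcal{P}_{V_1}}$ and $\mathcal{A}_{\mathcal{P}_{V_2}}$ through the analogue of Definition \ref{multi_adjacency_def} for the bipartite projections. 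This gives $\mathcal{A}_{\mathcal{P}_{V_1}}:M\to N$ with $m\mapsto\sum tr_H(A_1m)A_2$, and $\mathcal{A}_{\mathcal{P}_{V_2}}:N\to M$ with $n\mapsto\sum tr_K(B_1n)B_2$. Composing these reproduces the formula above, because $tr_K(B_1A_2)=tr_K(A_2B_1)$ by the trace property. Minimality of $M$ is what makes $tr_H$ the trace used on $L^2(M)$, so that these adjacency formulas agree with the paper's conventions.

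I expect the main obstacle to be the bookkeeping in (1): checking that the Weaver actions really correspond leg by leg under $\sigma$ and $\sigma'$, with the opposite-algebra and conjugation conventions consistent. Once that identity is established, (2) is a direct computation.
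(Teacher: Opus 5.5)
Your proposal is correct and follows essentially the same route as the paper. For (1), the paper proves exactly your intertwining identity, $\pi(\sigma'(\mathcal{P}_{V_1}\otimes\mathcal{P}_{V_2}))\circ\sigma=\sigma\circ(\pi_1(\mathcal{P}_{V_1})\otimes\pi_2(\mathcal{P}_{V_2}))$, by evaluating both sides on rank-one elementary tensors $\theta_{\xi_1,\overline{\eta_1}}\otimes\theta_{\overline{\eta_2},\xi_2}$ (this is the bookkeeping you flagged), then compares the two orthogonal projections onto $V=\sigma(V_1\otimes V_2)$ with faithfulness of the Weaver action left implicit, and for (2) it does the same trace computation you describe.
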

\begin{proof}
Let us assume $\mathcal{P}_{V_1}=\sum_{i} P_i\otimes Q_i$ and $\mathcal{P}_{V_2}=\sum_{j} P'_j\otimes Q'_j$. Then it follows that 
\begin{align}\label{projector_tensor_swap}
 \sigma'(\mathcal{P}_{V_1}\otimes \mathcal{P}_{V_2})=\sum_{i,j} P_i\otimes Q'_j\otimes Q_i \otimes P'_j .   
\end{align}
We denote $\pi_1$, $\pi_2$ and $\pi$ to be the Weaver actions of $M\otimes N$, $N^{op}\otimes M^{op}$ and $M\otimes M^{op}\otimes N\otimes N^{op}$ on $B(\overline{K},H)$, $B(H,\overline{K})$ and $B(H)\otimes B(K)$ respectively.  For $\xi_1,\xi_2\in H$ and $\eta_1, \eta_2\in K$,  we observe that, $$\sigma(\theta_{\xi_1, \overline{\eta_1}}\otimes \theta_{\overline{\eta_2},\xi_2})=\theta_{\xi_1\xi_2}\otimes \theta_{\eta_1,\eta_2}\in B(H)\otimes B(K).$$
It follows that,
\begin{align}
    \sigma\circ (\pi_1(\mathcal{P}_{V_1})\otimes \pi_2(\mathcal{P}_{V_2}))(\theta_{\xi_1, \overline{\eta_1}}\otimes \theta_{\overline{\eta_2},\xi_2})&=\sigma(\sum_{i,j} \theta_{(P_i \xi_1), (\overline{Q_i \eta_1})}\otimes \theta_{(\overline{P'^*_j \eta_2}), (Q'^*_j \xi_2)})\notag\\
    &=\sum_{i,j}\theta_{(P_i \xi_1),(Q'^*_j \xi_2)}\otimes \theta_{(Q_i \eta_1), (P'^*_j \eta_2)}\notag\\
    &=\sum_{i,j} P_i \theta_{\xi_1,\xi_2} Q'_j \otimes Q_i \theta_{\eta_1,\eta_2} P'_j\notag\\
    &=\pi(\sigma'(\mathcal{P}_{V_1}\otimes \mathcal{P}_{V_2}))(\theta_{\xi_1,\xi_2}\otimes \theta_{\eta_1\eta_2}). \label{swap_eqiv_actions}
\end{align}
As $\pi_1(\mathcal{P}_{V_1})\otimes \pi_2(\mathcal{P}_{V_2})$ projects $B(\overline{K}, H)\otimes B(H,\overline{K})$ onto $V_1\otimes V_2$, we observe that, $\sigma\circ (\pi_1(\mathcal{P}_{V_1})\otimes \pi_2(\mathcal{P}_{V_2}))\circ \sigma^{-1}$ orthogonally projects $B(H)\otimes B(K)$ onto $V$. As $\pi(\mathcal{P}_V)$ also projects $B(H)\otimes B(K)$ onto $V$, from equation \ref{swap_eqiv_actions}, (1) follows. \par 
 The adjacency operators $\mathcal{A}_{\mathcal{P}_{V_1}}: M\rightarrow N^{op} $ and $\mathcal{A}_{\mathcal{P}_{V_2}}:N^{op}\rightarrow M$ are given by,
\begin{align*}
    \mathcal{A}_{\mathcal{P}_{V_1}}(m)=\sum_i tr_H(P_im)Q_i\quad\text{and}\quad \mathcal{A}_{\mathcal{P}_{V_2}}(n)&=\sum_j tr_{\overline{K}}(\pi_{op}(P'_j*n))Q'_j\\
    &=\sum_j tr_K (nP'_j)Q'_j
\end{align*}
To prove (2),  using proposition \ref{P_v-S_v_relation} and equation \ref{projector_tensor_swap} we observe that, for $m\in M$,
\begin{align*}
    \mathcal{A}_{\mathcal{P}_{V_2}}\circ  \mathcal{A}_{\mathcal{P}_{V_1}}(m)= \sum_i\mathcal{A}_{\mathcal{P}_{V_2}}(tr_H(P_im)Q_i)=\sum_{i,j} tr_H(P_i m)tr_K(Q_i P'_j)Q'_j=\mathcal{A}_{\mathcal{S_V}}(m).
    \end{align*}
 \end{proof}
 \begin{rem}
While preparing this paper, we came across a recent work \cite{daws2026quantum} in which the author introduces the notion of composition of two bipartite or generalized quantum relations. In our setting, the components of a decomposable multi-relation can be interpreted as two bipartite quantum relations. Moreover, part (2) of Proposition \ref{decom_mult_idn} shows that the weighted adjacency operator of the multigraph coincides with the adjacency operator arising from the composition of these quantum relations.
 \end{rem}
\begin{prop}\label{symm_decomp_char}
  Let $V$ be a decomposable quantum multi-relation. Then following conditions are equivalent:
  \begin{enumerate}
      \item $V$ is a symmetric  quantum multi-relation. 
      \item $V_1=V^*_2$ where $V_1\subseteq B(\overline{K},H)$ and $V_2\subseteq B(H,\overline{K})$ are components of $V$. 
\end{enumerate}
Moreover, if $V$ is symmetric, then $V^2\subseteq V$, that is, $V$ is also transitive.
\end{prop}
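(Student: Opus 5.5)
The plan is to turn the adjoint and the product on $B(H)\otimes B(K)$ into operations on the components $V_1$, $V_2$ through the flip $\sigma$ of notation \ref{nota_swap}. Everything rests on two identities for elementary tensors, which I would check on rank one operators and then extend by (anti)linearity.

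\textbf{Adjoint identity.} Recall that $\sigma(\theta_{\xi_1,\overline{\eta_1}}\otimes\theta_{\overline{\eta_2},\xi_2})=\theta_{\xi_1,\xi_2}\otimes\theta_{\eta_1,\eta_2}$, as used in the proof of proposition \ref{decom_mult_idn}. Taking adjoints gives
\begin{align*}
\big(\theta_{\xi_1,\xi_2}\otimes\theta_{\eta_1,\eta_2}\big)^*=\theta_{\xi_2,\xi_1}\otimes\theta_{\eta_2,\eta_1}=\sigma\big(\theta_{\xi_2,\overline{\eta_2}}\otimes\theta_{\overline{\eta_1},\xi_1}\big)=\sigma\big((\theta_{\overline{\eta_2},\xi_2})^*\otimes(\theta_{\xi_1,\overline{\eta_1}})^*\big).
\end{align*}
Both sides of $\sigma(A\otimes B)^*=\sigma(B^*\otimes A^*)$ are conjugate-linear in $A$ and in $B$. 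Since rank one operators span $B(\overline{K},H)$ and $B(H,\overline{K})$, the identity holds for all $A\in B(\overline{K},H)$ and $B\in B(H,\overline{K})$. Passing to spans yields
\begin{align*}
V^*=\sigma(V_2^*\otimes V_1^*),
\end{align*}
where $V_2^*\subseteq B(\overline{K},H)$ and $V_1^*\subseteq B(H,\overline{K})$.

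\textbf{Equivalence of (1) and (2).} If $V_1=V_2^*$, then $V^*=\sigma(V_1\otimes V_2)=V$, so $V$ is symmetric. Conversely, suppose $V^*=V$. Because $\sigma$ is a linear isomorphism, we get $V_2^*\otimes V_1^*=V_1\otimes V_2$ as subspaces of $B(\overline{K},H)\otimes B(H,\overline{K})$. For nonzero subspaces, $A\otimes B=A'\otimes B'$ forces $A=A'$ and $B=B'$. To see this, pick a nonzero functional $\phi$ on the second factor with $\phi(B)\neq0$ and apply the slice map $id\otimes\phi$: this gives $A=(id\otimes\phi)(A\otimes B)=(id\otimes\phi)(A'\otimes B')\subseteq A'$, and the reverse inclusion follows symmetrically. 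Hence $V_1=V_2^*$.

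The only delicate point is the degenerate case $V=0$. There one component may be zero while the other is arbitrary, so (2) can fail for a bad choice of components. I would dispose of this by noting that for $V=0$ one may always take $V_1=V_2=0$, so (2) holds for an admissible choice of components.

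\textbf{Transitivity.} I would compute a product of two elementary images:
\begin{align*}
(\theta_{\xi_1,\xi_2}\otimes\theta_{\eta_1,\eta_2})(\theta_{\xi_3,\xi_4}\otimes\theta_{\eta_3,\eta_4})=\langle\xi_2,\xi_3\rangle\langle\eta_2,\eta_3\rangle\,\theta_{\xi_1,\xi_4}\otimes\theta_{\eta_1,\eta_4}.
\end{align*}
With $B=\theta_{\overline{\eta_2},\xi_2}$ and $C=\theta_{\xi_3,\overline{\eta_3}}$, one has $tr(BC)=\langle\xi_2,\xi_3\rangle\langle\overline{\eta_3},\overline{\eta_2}\rangle=\langle\xi_2,\xi_3\rangle\langle\eta_2,\eta_3\rangle$. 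By bilinearity this gives
\begin{align*}
\sigma(A\otimes B)\,\sigma(C\otimes D)=tr(BC)\,\sigma(A\otimes D).
\end{align*}
Therefore $V^2\subseteq\sigma(V_1\otimes V_2)=V$. In fact this argument shows transitivity for every decomposable multi-relation, so in particular for symmetric ones.

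The main obstacle is bookkeeping: keeping the conjugate-linear identifications of $\overline{K}$ consistent so that the adjoint and trace identities come out exactly as stated. The tensor-cancellation step and the zero case are routine.
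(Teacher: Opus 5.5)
Your proof is correct. For the equivalence it follows the paper's route: first the identity $V^*=\sigma(V_2^*\otimes V_1^*)$, then slice-map cancellation. The paper only records that identity as a commuting diagram, whereas you verify it on rank-one operators. The paper's $\tau_S(T_1\otimes T_2)=T_1\,tr_H(S^*T_2)$ is your $id\otimes\phi$ with $\phi=tr_H(S^*\,\cdot\,)$.

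Your handling of $V=0$ is a genuine refinement. The paper's argument silently needs a nonzero $S\in V_2$. When $V=0$ the components are not unique (take $V_1=0$, $V_2\neq 0$), so (1)$\Rightarrow$(2) holds only for a suitable choice of components. For $V\neq 0$ the same slice argument shows the components are uniquely determined, so the statement is well posed there.

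The real difference is transitivity. The paper first uses symmetry to write $V=\sigma(V_2^*\otimes V_2)$. It then computes $\sigma(F_k^*\otimes F_l)=\theta_{vec(F_k^*),vec(F_l^*)}$, so $V=P\,B(H\otimes K)\,P$ for the projection $P$ onto $span\{vec(F_k^*)\}$. That explicit form, equation \ref{sym_decmp_form}, is what gets reused in Theorem \ref{fin_thm}, so the paper's computation is not wasted. Your product rule $\sigma(A\otimes B)\,\sigma(C\otimes D)=tr(BC)\,\sigma(A\otimes D)$ skips that computation and proves more: every decomposable multi-relation is closed under multiplication, symmetric or not. Put differently, $\sigma(A\otimes B)$ is a rank-one operator $\theta_{a,b}$ on $H\otimes K$, with $a$ linear in $A$ and $b$ conjugate-linear in $B$. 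Hence any decomposable $V$ is a corner $P\,B(H\otimes K)\,Q$, which is automatically an algebra, and symmetry only forces $P=Q$.
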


\begin{proof}
We observe that the diagram in figure \ref{Components of $V^*$} commutes 
\begin{figure}[h]
    \centering
  \begin{tikzcd}
T_1 \otimes T_2
\arrow[r, "\sigma"]
\arrow[d]
&
\sigma(T_1 \otimes T_2)
\arrow[d, "(\cdot)^*"]
\\
T_2^* \otimes T_1^*
\arrow[r, "\sigma"]
&
(\sigma(T_1 \otimes T_2))^*
\end{tikzcd}
    \caption{Components of $V^*$ for a decomposable multi-relation $V$}
    \label{Components of $V^*$}
\end{figure}
where $T_1\in B(\overline{K},H)$ and $T_2\in B(H,\overline{K})$ and $\sigma:B(\overline{K},H)\otimes B(H,\overline{K})\rightarrow B(H)\otimes B(K)$ defined in notation \ref{nota_swap}. Therefore \begin{align}\label{V^*_decomp}
  V^*=\sigma(V^*_2\otimes V^*_1)  
\end{align}. Let us assume that $V$ is symmetric, that is, $V=V^*$.\\
\textbf{Claim:} $V_1=V^*_2$\\
\textbf{Proof of claim}: We define $W=V_1\otimes V_2=V^*_2\otimes V^*_1$. For a nonzero $S\in V_2$, let us define $\tau_S:W\rightarrow B(\overline{K},H)$ by $\tau_S(T_1\otimes T_2)=T_1\:tr_H(S^* T_2)$. As $S\in V_2$, $\tau_S(W)=V_1$. On the other hand, considering $W=V^*_2\otimes V^*_1$, $\tau_S(W)\subseteq V^*_2$. Therefore $V_1\subseteq V^*_2$. Conversely, by taking $S\in V^*_1$, we conclude that $V^*_2\subseteq V_1$. \par 
Conversely, if $V_1=V^*_2$, using equation \ref{V^*_decomp} it follows that $V$ is symmetric. \par
Now we prove that any symmetric decomposable quantum multi-relation is transitive. Using the previous part, it follows that, $V=\sigma(V^*_2\otimes V_2)$. Let us write
$V_2=span\:\{F_k\:|\:k\:\}$ where $\{F_k\:|\:k\}$ is a basis of $V_2$. Therefore,
\begin{align}\label{k,l_span_V*times_V}
V^*_2\otimes V_2=span\{F^*_k\otimes F_l\:|\:k,l\}
\end{align}

We recall that 
\begin{align}
   H\otimes K\ni vec(F^*_k)&=\sum_{i,j} (F^*_k)_{ij} e_i\otimes f_j=\sum_i e_i\otimes \overline{F_k(e_i)};\notag\\
   \overline{K}\otimes \overline{H}\ni vec(F_l)&=\sum_{j,i}(F_l)_{ji} \overline{f_j}\otimes \overline{e_i}=\sum_{j} \overline{f_j}\otimes \overline{F^*_l (\overline{f_j})}.\notag
\end{align} where $e_i$'s are orthonormal basis of $H$ and $f_j$'s are orthonormal basis of $K$. Using above expressions it follows that,
\begin{align*}
\sigma (F^*_k\otimes F_l)&=\sigma \big( \sum_{i,j} \theta_{e_i, (F_ke_i)} \otimes \theta_{\overline{f_j}, (F^*_l(\overline{f_j}))}\big)\\
&=\sum_{i,j} \theta _{e_i, (F^*_l(\overline{f_j}))} \otimes \theta_{\overline{F_ke_i}, f_j}\in B(H)\otimes B(K)\\
&=\sum_{i,j}\theta_{(e_i\otimes \overline{F_ke_i}), (F^*_l(\overline{f_j})\otimes f_j)}=\theta_{vec(F^*_k), vec(F^*_l)}\in B(H\otimes K).
\end{align*}
As $V=\sigma(V^*_2\otimes V_2)$, from \ref{k,l_span_V*times_V} and above computations it follows that,
\begin{align}\label{sym_decmp_form}
    V=span\:\{\theta_{vec(F^*_k), vec(F^*_l)}\:|\:k,l\}
\end{align}
Now it is easy to see that $V^2\subseteq V$.
\end{proof}
The notion of decomposability can be generalized when $N$ is direct sum of matrix algebras. Let $N=\bigoplus B(K_b)$ where $K_b$'s are finite dimensional Hilbert spaces and $K=\bigoplus_b K_b$. A multi-relation $V$ on $(M,N)$ is said to be ``decomposable" if $V_b:=(id\otimes j_b)V$ is decomposable for each $b$ where $j_b:K\rightarrow K_b$ is the orthogonal projection.  
We conclude this section with the following theorem.
\begin{thm} \label{fin_thm}
Let $M=\bigoplus_a B(H_a)$ and $N=\bigoplus_bB(K_b)$ be two finite dimensional algebras and $V\subseteq B(H)\otimes N^{op}$ be a symmetric decomposable quantum multi-relation where $H=\bigoplus_a H_a$. Then there exists a CP map $\Phi:M\rightarrow N$ such that $\tilde{S}_{\Phi}=V$ where $\tilde{S_{\Phi}}$ is the confusability multigraph of $\Phi$. 
\end{thm}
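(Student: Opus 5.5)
Fix a summand $B(K_b)$ of $N$ and work with one decomposable block $V_b=(id\otimes j_b)V$ at a time. The plan is to read Kraus operators off the decomposition of $V_b$ and match them with the Kraus form of the multigraph, Theorem \ref{multigraph_kraus_form}. At the end the blocks are reassembled using the direct sum decomposition $\tilde{S}_\Phi=\bigoplus_b \tilde{S}_{\Phi_b}$ from the remark following that theorem.

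\emph{Step 1: the single-block case.} By Proposition \ref{symm_decomp_char}, symmetry of $V_b$ forces its components to satisfy $V_1=V_2^*$. Equation \eqref{sym_decmp_form} then gives
\[
V_b=span\{\theta_{vec(F_k^*),vec(F_l^*)}\:|\:k,l\},
\]
where $\{F_k\}$ is a basis of $V_2\subseteq B(H,\overline{K_b})$. I will define $E_{bk}:H\to K_b$ as the composition of $F_k$ with the antilinear identification $\overline{K_b}\cong K_b$ coming from a fixed orthonormal basis; equivalently, $E_{bk}$ has the entrywise conjugate matrix. I then set
\[
\Phi(x)=\sum_{b,k} j_b^*E_{bk}\,x\,E_{bk}^*j_b ,
\]
which is CP by construction.

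\emph{Step 2: compare with the Kraus formula.} Theorem \ref{multigraph_kraus_form} gives
\[
\tilde{S}_\Phi=span\Big\{\sum e_{ij}\otimes\theta_{E_{bl}e_j,E_{bk}e_i}\Big\}.
\]
In the purely quantum case, equation \eqref{quant_mult_kraus_form_2} rewrites this, after applying $id\otimes\pi_{op}$, as
\[
span\{vec(E_k^*)\otimes\overline{vec(E_l^*)}\},
\]
that is, as rank-one operators $\theta_{vec(\cdot),vec(\cdot)}$. So the key step is to check that the identification of $B(H)\otimes B(K_b)^{op}$ with $B(H\otimes\overline{K_b})$ through $\pi_{op}$, together with the passage from $E_{bk}$ to $F_k$, carries $\sum_{i,j} e_{ij}\otimes\theta_{E_{bl}e_j,E_{bk}e_i}$ to $\theta_{vec(F_k^*),vec(F_l^*)}$. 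Both sides are built from the same sums $\sum_i e_i\otimes\overline{F_ke_i}$, so this should be a direct matrix-unit computation. The only danger is getting the pairing $(k,l)$ versus $(l,k)$ right; symmetry of $V_b$ makes the spanning set invariant under that swap anyway.

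\emph{Step 3: the module constraints.} $V$ is an $(M'\otimes1)$-bimodule, so each $V_2$ is a right $M'$-module. This should allow the $F_k$ to be chosen compatible with the block structure of $M$, i.e.\ each $F_k$ supported on a single $H_a$, or at least the resulting $\Phi$ can be checked to restrict to a map $M\to N$. Theorem \ref{multigraph_kraus_form} uses precisely this block-supported form of the Kraus operators. If the $F_k$ cannot be chosen block-supported, I would instead compress: replace $\{F_k\}$ by $\{F_kp_a\}$, where the $p_a$ are the central projections of $M$ (which lie in $M'$), and note that these still span $V_2$ because $V_2p_a\subseteq V_2$.

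I expect the main obstacle to be the bookkeeping in Step 2: keeping straight the conjugate-space conventions of $vec$, $\sigma$ and $\pi_{op}$, and the statement's use of $N^{op}$ versus the $O^{op}$ appearing in the confusability multigraph. I would handle this by verifying the identity on rank-one $F_k$ first and then extending by linearity.
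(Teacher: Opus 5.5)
Your plan follows the paper's proof. You treat each block $V_b$ separately, use Proposition \ref{symm_decomp_char} and \eqref{sym_decmp_form} to write $V_b=span\{\theta_{vec(F_k^*),vec(F_l^*)}\}$, and use the spanning elements of the second component as Kraus operators. Your compression by the central projections $p_a\in M'$ is a correct, slightly more explicit version of the paper's ``we can assume block form''. You then compare with Theorem \ref{multigraph_kraus_form}.

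However, Step 1 contains a genuine error, located in exactly what you postpone as bookkeeping. The label algebra is $N^{op}$, which acts on $\overline{K_b}$ through $\pi_{op}$ (Remark \ref{opp_action}). The paper's proof reads $V$, and hence its decomposability, inside $B(H)\otimes B(\overline{K_b})$. Definition \ref{decomposable_multi} must therefore be applied with $\overline{K_b}$ in place of $K$. The components are then $V_1\subseteq B(K_b,H)$ and $V_2\subseteq B(H,K_b)$, not $B(H,\overline{K_b})$. So the $F_k$ are already linear maps $H\to K_b$, and they are themselves the Kraus operators. Indeed, since $\pi_{op}(\theta_{\xi,\eta})=\theta_{\overline{\eta},\overline{\xi}}$, the generator $\sum_{i,j}e_{ij}\otimes\theta_{F_le_j,F_ke_i}$ of $\tilde{S}_\Phi$ is sent to $\sum_{i,j}e_{ij}\otimes\theta_{\overline{F_ke_i},\overline{F_le_j}}=\theta_{vec(F_k^*),vec(F_l^*)}$, with the same pair $(k,l)$. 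That is precisely the paper's closing step.

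The conjugation you insert is therefore not a harmless relabeling; it changes the answer. Take $N=\mathbb{C}$, $H=\mathbb{C}^2$, and $V_2=span\{F\}$ with $Fe_1=1$, $Fe_2=i$.
Then $V=span\{\theta_{v,v}\}$ with $v=e_1-ie_2$, and $\Phi=F(\cdot)F^*$ has $\tilde{S}_\Phi=V$.
The entrywise conjugate $F'$ (with $F'e_2=-i$) instead produces $span\{\theta_{w,w}\}$ with $w=e_1+ie_2\perp v$.

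Suppose instead you genuinely decompose $V_b$ set-wise inside $B(H\otimes K_b)$, which is what $V_2\subseteq B(H,\overline{K_b})$ amounts to. Then the identity you aim for in Step 2 cannot hold. In that picture a generator of $\tilde{S}_\Phi$ is the partial transpose of a rank-one operator; for a single Kraus operator equal to the identity of $\mathbb{C}^2$ it is the flip. For example, take $M=N=B(\mathbb{C}^2)$ and let $f_{ij}$ be the matrix units of $N$. Then $span\{\sum_{i,j}e_{ij}\otimes f_{ij}\}$ is symmetric and set-wise decomposable, yet equals no $\tilde{S}_\Phi$: its dimension forces Kraus rank one, and no single operator $E$ works.

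So the real danger is conjugation and transposition, not the $(k,l)$ versus $(l,k)$ pairing, which is harmless since both indices run over the same set. Once $N^{op}$ is placed on $\overline{K_b}$ via $\pi_{op}$ and the conjugation is dropped, your argument coincides with the paper's and goes through.
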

\begin{proof}
We observe that $N^{op}\cong \bigoplus_b B(\overline{K_b})$ via the isomorphism $\pi_{op}$ (remark \ref{opp_action}). As $V$ is  decomposable and symmetric, that is, each $V_b$ is decomposable and symmetric, using proposition \ref{symm_decomp_char}, it follows that there exists $W_b\subseteq B(H, K_b)$ such that, $$\sigma(W_b^*\otimes W_b)=V_b.$$ Let $\{F_{bk}\: |\: k\}$ be a spanning set for $W_b$. As $W_b$ is also a right $M'$ module, we can assume tha $F_{bk}$'s have the ``block form", that is, for each $F_{bk}$, there exists $a$ such that $Ker (F_{bk})\supseteq H^{\perp}_a$. Using equation \ref{sym_decmp_form} we observe that,
\begin{align}
    V=\bigoplus_bV_b&=\bigoplus_bspan\:\{\theta_{vec(F^*_{bk}), vec(F^*_{bl})}\:|\:k,l\}\notag\\
    &=\bigoplus_b span\:\big\{\sum_{i,j,a,a'}e^{aa'}_{ij}\otimes \theta_{({\overline{F_{bk}e^a_i})}, ({\overline{F_{bl}e^{a'}_j})}}\:|\:k,l\big\}.\label{final_thm_eq}
\end{align}
We define a completely positive map $\Phi:M\rightarrow N$ by
\begin{align*}
    \Phi(x)=\sum_{b,k} j_bF_{bk}\: x\: F^*_{bk}j^*_b\quad\text{where}\quad x\in M.
\end{align*}
Using theorem \ref{multigraph_kraus_form}, it follows that,
\begin{align*}
    \tilde{S}_{\Phi}= span \:\{\sum_{i,a,j,a'} {e^{aa'}_{ij}}\otimes \theta_{(F_{bl}e^{a'}_j), (F_{bk}e^a_i)}\:|\:b,k,l\}
\end{align*}
As we have identified $N^{op}$ with $\bigoplus_b B(\overline{K_b})$ via the identification $\theta_{\xi,\eta}\mapsto \theta_{\overline{\eta}, \overline{\xi}}$, using equation \ref{final_thm_eq}, the theorem follows.
\end{proof}

\bibliographystyle{alphaurl}
\bibliography{my_bib}

\end{document}